\journal{Journal of Computational Physics}
\newtheorem{lemma}{Lemma}
\newproof{proof}{Proof}
\newcommand{\Bm}{\mathfrak{L}_m^{\infty}}
\newcommand{\Bmi}{\mathfrak{L}_{m,i}^{\infty}}
\newcommand{\Bo}{\mathfrak{L}_o^{\infty}}
\newcommand{\Boi}{\mathfrak{L}_{o,i}^{\infty}}
\newcommand{\Em}{\mathscr{E}_m}
\newcommand{\Emi}{\mathscr{E}_{m,i}}
\newcommand{\Eo}{\mathscr{E}_{o}}
\newcommand{\Eoi}{\mathscr{E}_{o,i}}
\begin{document}
\begin{frontmatter}
\title{Localised sequential state estimation for advection dominated flows with non-Gaussian uncertainty description}
\author[er]{Emanuele Ragnoli\corref{cor}}
\ead{emanuele.ragnoli@ie.ibm.com}

\author[mz]{Mykhaylo Zayats}
\ead{m.zayats1@nuigalway.ie}

\author[er]{Fearghal O'Donncha}
\ead{feardonn@ie.ibm.com}

\author[er]{Sergiy Zhuk}
\ead{sergiy.zhuk@ie.ibm.com}

\cortext[cor]{Corresponding author}
\address[mz]{ NUI Galway, Galway, Ireland}
\address[er]{IBM Research, Dublin, Ireland}

\begin{abstract}
This paper presents a new iterative state estimation algorithm for advection dominated flows with non-Gaussian uncertainty description of $L^\infty$-type: uncertain initial condition and model error are assumed to be pointvise bounded in space and time, and the observation noise has uncertain but bounded second moments. The algorithm approximates this $L^\infty$-type bounding set by a union of possibly overlapping ellipsoids, which are localized (in space) on a number of sub-domains. On each sub-domain the state of the original system is estimated by the standard $L^2$-type filter (e.g. Kalman/minimax filter) which uses Gaussian/ellipsoidal uncertainty description and observations (if any) which correspond to this sub-domain. The resulting local state estimates are stitched together by the iterative d-ADN Schwartz method to reconstruct the state of the original system. The efficacy of the proposed method is demonstrated with a set of numerical examples.
\end{abstract}

\begin{keyword}
data assimilation \sep
filtering\sep
minimax\sep
domain decomposition\sep
advection dominated flows
\end{keyword}
\end{frontmatter}

\section{Introduction}
\label{sec:introduction}
Consider an advection-diffusion process described by the following partial differential equation (PDE):
\begin{equation}\label{model}
  \begin{split}
u_t = -&\mu \cdot \nabla u+\epsilon \Delta u + f+e \quad \text{ in } \Omega\times (0,T)\\
&u(0,x)=u_0(x)+e_0(x),\quad u=0\, \quad \text{on } (0,T]\times\partial\Omega
\end{split}
\end{equation}
The initial state of the process, $u_0$ and the forcing term $f$ are presumed to be approximations of the ``true'' initial state and forcing respectively, and the error of this approximation is quantified by $e_0$ and $e$, uncertain parameters which are assumed to be just bounded ($L^\infty$-type uncertainty description): $|e_0(x)| \leq q_0(x)$ and $|e(t,x)|\le q(t,x)$ for given functions $q_0$ and $q$. In other words, every $e_0$ and $e$ satisfying the aforementioned inequality almost everywhere is equally possible.

The information about the dynamics of the state is obtained via a noisy observation process:
\begin{equation}\label{obs}
y(t,x) = Hu(t,x) + \eta,\quad  Hu(t,x) = \int_\Omega h(x-y)u(x,t)dx +\eta(t,x),
\end{equation}
where the noise $\eta(t,x)$ is of stochastic nature with zero mean and unknown but bounded second moments: $E[\eta^2(t,x)r(t,x)] \leq 1$ for a given $r$. Consider a filter, that is the accompanying process described by the following PDE:
\begin{equation}\label{filter}
  \begin{split}
\hat u_t &= -\mu \cdot \nabla \hat u+\epsilon \Delta \hat u + f + K(y-Hu) \text{in } \Omega\times (0,T)\\
u(0,x)&=u_0(x),\quad u=0\, \text{on } (0,T]\times\partial\Omega
\end{split}
\end{equation}
\textbf{The problem is} given $L^\infty$-type (non-Gaussian) uncertainty description, to design the gain $K$ so that the worst-case mean-squared estimation error, i.e. $\max_{e_0,e,\eta}E\|u-\hat u\|^2$ is minimal (in an appropriate norm).

In this work we solve the above problem by combining ideas from optimal control and numerical analysis. Specifically, the computational domain $\Omega$ is decomposed into a set of small non-overlapping subdomains, and, then, the $L^\infty$-constraints on $e$ and $e_0$ are approximated by $L^2$-type constraints, pretty much like circumscribing a rectangle by an ellipse of minimal volume. The error of approximating $L^\infty$-constraints by $L^2$-ellipsoid on a small sub-domain can be made quite small. This suggests to restrict~\cref{model} and~\cref{obs} to the introduced subdomains, and design a continous $L^2$-minimax filter for each subdomain. The aforementioned restriction is done by the adaptive Dirichlet-Neumann (ADN) domain decomposition (DD) approach since ~\cref{model} to accomodate the case of little or no diffusion. The resulting \emph{interconnected localised filters} are exchanging data with each other through boundary conditions: the continuity of the global state estimate across the subdomain interfaces is guaranteed by an alternating Schwartz approach.  Finally, the continuous filters are discretized in space by means of Finite Element Method (FEM), and a simplectic Runge-Kutta method is used for time integration. The resulting numerical algorithm, which approximates~\cref{filter} for the case of $L^\infty$-type model errors $e_0$ and $e$, and incomplete and noisy observations with random noise $\eta$ with uncertain but bounded second moments, is our main contribution.

\paragraph{Motivation and related work}
Problems like \cref{model}--\cref{filter} are fundamental in many fields including data assimilation for geophysical flows, and more specifically the study of ocean processes and events. Indeed, many marine based industries require accurate forecasts of the transport and trajectories of dissolved and suspended material. Examples include the transport of nutrients around aquaculture installations~\cite{odonncha2013physical}, forecasting oil spill evolution for remediation efforts~\cite{guo2009modeling} and monitoring releases from industrial operations~\cite{koziy1998three}, and \emph{data assimilation} is widely used to solve the aforementioned engineering problems. Data assimilation improves the accuracy of forecasts provided by physical models and evaluates their reliability by optimally combining \emph{a priori} knowledge encoded in equations of mathematical physics with \emph{a posteriori} information in the form of sensor data. Mathematically, many DA methods rely upon various approximations of stochastic filters. We refer the reader to~\cite{reich2015probabilistic,law2015data} for further discussions on mathematics behind data assimilation.

In the control/data assimilation literature, the problem of this paper is known as a filtering problem (if $\eta$ and $e_0$, $e$ are stochastic) or state estimation problem (for deterministic $\eta$, $e_0$, $e$). Theoretically, solution of the stochastic filtering problem for linear systems is given by the Kalman-Bucy filter~\cite{bensoussan1971filtrage}, provided $\eta$ and $e_0$, $e$ have appropriate (normal) distributions. In contrast, deterministic state estimators assume that errors have bounded energy and belong to a given bounding set. The state estimate is then defined as a minimax center of the reachability set, a set of all states of the physical model which are reachable from the given set of initial conditions and are compatible with observations. Dynamics of the minimax center is described by a minimax filter~\cite{kurzhanski_ellipsoidal_1997,nakonechny_minimax_1978, zhuk_kalman_2013, zhuk_minimax_2010}. In case of ellipsoidal bounding sets and linear dynamics, equations of the minimax filter coincide with those of Kalman-Bucy filter~\cite{krener1980kalman}.\\
In this paper we consider the case of deterministic $e_0$, $e$ and stochastic observation noise for practical reasons: indeed, a statistical description of the modelling errors/disturbances $e_0$, $e$ is often not available in many practical situations, e.g. in oceanography, but empirical estimates of the first and second moments of the measurements noise $\eta$ (e.g. pointvise bounds mentioned above) are usually provided by sensors~\cite{Lynn1973}. Since the classical Kalman/minimax filter cannot be applied directly for this ``hybrid'' uncertainty description, i.e. deterministic $e_0$, $e$ and stochastic $\eta$ with bounded second moments, on each subdomain we use the minimax filter for linear parabolic PDEs~\cref{model} proposed in~\cite{nakonechny_minimax_1978}. This latter filter is designed to work with stochatsic/determinstic uncertainties. We stress, however, that it does not apply directly to the case of $L^\infty$-type uncertainties considered here. A straightforward way to apply this filter in the considered case would be to approximate $L^\infty$-ellipsoid by $L^2$-ellipsoid which is very much like approximating a rectangle by the minimal ellipsoid which contains it. This approximation is quite crude, especially if the measure of the computational domain or/and the estimation horizon are large. In addition, the minimax filter is very demanding computationally and hence discretizing it over entire $\Omega$ does not scale well even in two spatial dimensions. However, as noted above, decomposing  the computational domain $\Omega$, and, then, approximating the $L^\infty$-constraints on each subdomain by $L^2$-type constraints does not introduce large errors, and, moreover, computing filters locally, on small subdomains becomes computationally tractable even for implicit time integrators, provided a proper domain decomposition approach has been chosen. Specifically, taking into account that the advective part in~\cref{model} is assumed to be dominant, we apply adaptive Dirichlet-Neumann (ADN) Domain Decomposition~\cite{gastaldi_adn_1998,quarteroni_domain_1999} which enforces boundary conditions across subdomain interfaces taking into account the direction of the advection. Note that implicit time integrators preserve dynamics of the state estimation error as it was outlined in~\cite{frank_symplectic_2014}, and hence our domain decomposition strategy combined with the simplectic Runge-Kutta method makes the numerical approximation of the estimation error computationally feasible and reliable. The latter is often not the case for state estimators based on explicit numerical methods.

This work is an extension of~\cite{ragnoli_localised_2015,ragnoli_domain_2014}. It is most related to the distributed Kalman/minimax filtering framework~\cite{mahmoud_distributed_2013} where, in contrast to the ideas of this paper, the ``distribution of filters'' is often done for a discrete model by decomposing a matrix, which represents a discretization of PDE's differential operator while here, we decompose the continuous problem, and discretize continuous  (in space and time) filters. The efficiency of \emph{interconnected localised filters} is demonstrated on a set of numerical examples. These experiments are characterised by idealised simulations of a concentration being transported either by a constant flow field or a non-stationary periodic flow filed. The benchmark for estimation is given by a correspondent known analytical solution, and a discussion of the computational complexity is included.

The rest of this paper is organised as follows: section \ref{sec:ProblemStatement} contains mathematical preliminaries; section \ref{sec:DDproblem} describes fully discrete interconnected localised filters; section \ref{sec:NumericalExperiment} presents the results of numerical experiments and discuss computational complexity; section \ref{sec:Conclusion} contains the conclusions and finally two appendixes complete the work with details of the FEM discretization and some proofs.

\section{Mathematical preliminaries}
\label{sec:ProblemStatement}
\textbf{Notation. } $\mathbb{R}^n$ denotes the $n$-dimensional Euclidean space. $\Omega$ a domain in $\mathbb{R}^n$, $\delta \Omega$ its boundary and $\Omega_T=(0,T]\times \Omega$ for some fixed time $T$. $\Omega_i$ is a subdomain of $\Omega$, and the intersection of the boundaries of a set of $\Omega_i$ is defined as the interface. $L^2([t_0,t_1], \mathbb{R}^n)$ denotes a space of square-integrable functions with values in $\mathbb{R}^n$. $H^1([t_0,t_1], \Omega)$ the Sobolev space of weak differentiable functions with support on $\Omega$ and $H^1_0(\Omega)$ is the space of functions in $H^1(\Omega)$ that vanish at the boundary. $L^\infty(0,T,H^1_0(\Omega))$  and $L^\infty(0,T,L^\infty(\Omega))$ are the spaces of almost everywhere bounded vector-functions with values in $H^1_0(\Omega)$ and $L^\infty(\Omega)$, respectively. $H^\star$ is the adjoint operator of $H$, $\delta(x-y)$ is the Dirac measure concentrated at $x$, $(\cdot,\cdot)$ is the canonical inner product in $\mathbb R^n$.

\textbf{State equation.}
Consider an advection diffusion problem described by the following linear parabolic equation:
\begin{equation}\label{eq:state}
\left\{ \begin{array}{ll}
u_t = Lu+f+e & \text{in } \Omega_T\\
u=u_0+e_0 & \text{on }  \{t=0\}\times \Omega\\
u=0\, & \text{on } (0,T]\times\partial\Omega\\
\end{array}\right.
\end{equation}
where $t$ and $x$ are the temporal and spatial variables, respectively, $\Omega$ is a bounded subset of $\mathbb{R}^n$ with Lipschitz boundary, $u_0,e_0 \in H^1_0(\Omega) $, $f,e\in L^2(0,T,L^\infty(\Omega))$, and $L$ is a uniformly parabolic~\cite[p.372]{Evans} differential operator. It is well known that in this case there exists a unique $u\in L^\infty(0,T,H^1_0(\Omega))$ verifying the equation~\eqref{eq:state} in the weak sense~\cite[p.372]{Evans}. To simplify the presentation, in what follows consider $L$ of the following form:
\begin{equation*}
\begin{array}{c}
Lu=-\mu \cdot \nabla u+\epsilon \Delta u\,,  \epsilon > 0
\end{array}
\end{equation*}
where $\mu\in C^1(\Omega_T)^n$ is a given divergence free vector field describing the flow transporting the quantity $u$. In what follows the case of advection-dominated flows, i.e. when the diffusion is strongly dominated by the advection (high Peclet number), will be considered. Note that the following results may be derived without major modifications for generic uniformly parabolic differential operators.

\textbf{Observation equation.} Assume that a function $y(t,x)$ is observed:
\begin{equation}\label{eq:obs}
y(t,x) = Hu(t,x)+\eta(t,x)\,,\quad Hu(t,x) = \int_\Omega h(x-y)u(x,t)dx\,,
\end{equation}
where $h$ is a given kernel function, and $\eta$ is a realization of a random field with zero mean and bounded and continuous (in $(t,x)$) correlation function. The function $y$ may be considered as measurements of the quantity $u$ subject to the measurement noise $\eta$, and $H$ is the mathematical model of the gauge.

\textbf{Uncertainty description.} Assume that $e_0$, $e$ and $\eta$ are uncertain parameters which represent error in the initial condition, model error (or an exogenous disturbance) and noise in the measurements. Further assume that $e_0$ and $e$ are elements of the given convex bounded set:
\begin{equation}\label{modelInfContEllipsoid1}
\Bm= \{ e_0(x),e(t,x): |e_0(x)| \leq q_0(x) , \; |e(t,x)|\le q(t,x) \}\,,
\end{equation}
where $q_0$ and $q$ are given weighting functions such that $0<\underline{q}_0\le q_0(x)\le \overline{q}_0<+\infty$ and $0<\underline{q}(t)\le q(t,x)\le \overline{q}(t)<+\infty$ for the given $\underline{q}_0,\overline{q}_0$ and $\underline{q},\overline{q}$. Note that $q_0$ and $q$ may be considered as design parameters which quantify our level of confidence in the initial condition and state equation: namely, $q_0$ may specify ``zones'' of $\Omega$ where the knowledge of the initial condition $u_0$ is more precise or less so, and $q$ defines zones of $\Omega$ where~\eqref{eq:state} holds almost exactly ($|e|\approx 0$ in that zone) or only up to a significant error ($|e|>0$) and these zones may vary over time. Statistically, this corresponds to the maximal entropy assumption, i.e., any $(e_0,e)\in\Bm$ have equal probability to appear in~\eqref{eq:state}. \\
In addition, assume that $\eta$ has bounded second moments (in $(t,x)$), that is:
\begin{equation}\label{observInfContEllipsoid1}
\Bo = \{\eta: E[\eta^2(t,x)r(t,x)] \leq 1 \}
\end{equation}
where $r$ is such that $0<\underline{r}(t)\le r(t,x)\le \overline{r}(t)<+\infty$ for given $\underline{r}$, $\overline{r}$. In fact, this assumption allows for an uncertainty in the statistical description of the observation noise $\eta$, which covers, in particular, a very practical case when the second moments of the observation noise are obtained from empirical estimators.

\textbf{The estimation problem} is to construct a computationally efficient estimate $\tilde u(T)$ of $u(T)$ with the minimal worst-case error in the direction $l\in L^2(\Omega)$, i.e., for any $l,v\in L^2(\Omega)$ the aim is to  search for a solution of the following problem:
\begin{equation}
\label{P1}
\begin{split}
  &\text{Find } \tilde u(T)\text{ such that: }\\
  &\sup_{(e_0,e)\in\Bm, \eta\in\Bo}E(l(\tilde u(T))-l(u(T)))^2\le \sup_{(e_0,e)\in\Bm, \eta\in\Bo}E(v(y)-l(u(T)))^2\,,\\
  &l(u)=\int_\Omega l(x)u(T,x) dx,\quad v(y)=\int_{\Omega_T} v(t,x)y(t,x)dxdt\,.
\end{split}
\end{equation}
In other words, a function $\tilde u(T)$ is constructed such that the worst-case mean-squared estimation error is minimal (see 2nd line in \eqref{P1}, provided that (i) $u$ solves the state equation~\eqref{eq:state}, and (ii) the error in the initial condition, $e_0$ and the model error $e$ are uncertain elements of the set $\Bm$, and (iii) the measurements noise $\eta$ belongs to the set $\Bo$.

It has been shown in~\cite{Zhuk2009d} that the optimal solution of the estimation problem \eqref{P1}, $\tilde u$ is the unique solution of an optimal control problem with a convex non-smooth cost functional (in the form of $L^1(\Omega)$-norm, the dual of $L^\infty$-norm) and a PDE constraint. To find the solution of this control problem one needs to solve Euler-Lagrange equations, which, in particular, implies that, to compute $\tilde u(t_2)$ for $t_2>T$ one needs to solve Euler-Lagrange equation for $t\in (0,t_2)$ as $\tilde u(t_2)$ cannot be expressed as a function of $\tilde u(T)$ and observations $y(t,x)$, $t\in(T,t_2]$. In other words, the estimate $\tilde u$ is not recursive. The reason for this is as follows: $\Bm$ is an ellipsoid of $L^\infty(\Omega)$ with respect to the $L^\infty(\Omega)$-norm, and the dual norm of the latter is given by the $L^1(\Omega)$-norm. Hence, $\Bm$ does not coincide with its dual set\footnote{This is obvious in the case of finite-dimensional Euclidean space where $\Bm$ would correspond to a rectangle and its dual will be a rhombus.}. On the other hand, the $L^2$-norm coincides with its dual norm and this property of the norm is necessary and sufficient to get estimates in the form of recursive filters, e.g. Kalman/minimax filter; see~\cite{Zhuk2009d}. Hence, a straightforward way to construct the recursive estimate $\hat u$ is to approximate the ellipsoid $\Bm$ by the $L^2(\Omega)$-ellipsoid:
\begin{equation}\label{modelContEllipsoid1}
\begin{split}
\Em = & \{ e_0(x),e(t,x): \int_{\Omega}e_0^2(x)q^{-2}_0(x)d\Omega + \\
+ & \int_{\Omega_T} e^2(t,x)q^{-2}(t,x) d\Omega dt \leq (T+1)\ell(\Omega) \}
\end{split}
\end{equation}
and $\Bo$ by
\begin{equation}\label{observContEllipsoid1}
\Eo = \{\eta: \int_{\Omega_T} E\eta^2(t,x)r(t,x)d\Omega dt \leq T\ell(\Omega) \}
\end{equation}
where $\ell(\Omega):=\int_{\Omega}d\Omega$ denotes the Lebesgue measure of the domain $\Omega$.

Intuitively, approximating $\Bm$ with $\Em$ is similar to approximating a rectangle by the minimal ellipsoid which contains it. Indeed, the level set of the $L^\infty$-type norm in the $n$-dimensional Euclidean space, i.e., $\{e=(e_1\dots e_n)^\top:\max_i|e_i|\le 1\}$, is a rectangle, and the level set of a $L^2$-type norm is an ellipsoid, i.e., $\{e=(e_1\dots e_n)^\top:\sum_{i=1}^ne^2_i\le 1\}$. Hence, it can be stated that $L^\infty$-type ($L^2$-type) norm has level sets of rectangular (ellipsoidal) shape for generic infinite-dimensional spaces. Consequently, as noted, $\Em$ can be considered as an ellipsoid of the space $L^2(\Omega)\times L^2(\Omega_T)$ containing $\Bm$. A similar argument can be applied to $\Bo$ and $\Eo$.\\

The key benefit of the aforementioned approximation is that the recursive estimate $\hat u$ of $u$ can be constructed, provided $(e_0,e)\in\Em$, $\eta\in\Eo$. Indeed, the estimate $\hat u$ of $u$ with minimal mean-squared estimation error, i.e., \[
\sup_{(e_0,e)\in\Em, \eta\in\Eo}E(l(\hat u(T))-l(u(T)))^2\le \sup_{(e_0,e)\in\Em, \eta\in\Eo}E(v(y)-l(u(T)))^2
\] admits the following representation: \[
l(\hat u(T)) = \int_{\Omega_T} r(t,x) (H p)(t,x)(y(t,x) - (Hw)(t,x)) dxdt + l(w(T))
\] provided $w$ solves
\begin{equation}\label{eq:w}
\left\{ \begin{array}{ll}
w_t = Lu+f& \text{in } \Omega_T\\
w=u_0 & \text{on }  \{t=0\}\times \Omega\\
w=0\, & \text{on } (0,T]\times\partial\Omega\\
\end{array}\right.
\end{equation}
and $p$ and $z$ solve the following Hamiltonian system of equations:
\begin{equation} \label{eq:dualSystem}
\left\{ \begin{array}{l}
    z_t = -L^\star z + H^\star rHp \text{ in }\Omega_T \\
    z(T,x) = l(x)  \text{ on } \Omega\\
    z(t,x) = 0 \text{ on } \partial\Omega\times [0,T]\\
    p_t = Lp + q^2z \text{ in }\Omega_T \\
    p(0,x) = q_0^2(x)z(0,x)  \text{ on } \Omega\\
    p(t,x) = 0 \text{ on } \partial\Omega\times [0,T]\\
\end{array}\right.
\end{equation}
The worst-case mean-squared estimation error is given by \[
\sup_{(e_0,e)\in\Em, \eta\in\Eo}E(l(\hat u(T))-l(u(T)))^2 = l(p)\,.
\]
Note that $\hat u$ can be represented as a filter, i.e., it can be shown that
\begin{equation} \label{eq:filter}
\left\{ \begin{array}{l}
    \hat u_t = L\hat u + f + PH^\star r(y-H\hat u) \text{ in }\Omega_T\\
    \hat u(t,x) = 0 \text{ on } \partial\Omega\times [0,T]\\
    \hat u(0,x)=u_0 \text{ on } \Omega
  \end{array}\right.
\end{equation}
where the operator $P$, a so called Riccati operator, is an integral operator of the following form:
\begin{equation}
\label{eq:contRicOp}
    (Pz)(t,x):=\int_\Omega k(t,x,\gamma) z(\gamma) d\gamma
\end{equation}
and $k$ is the kernel of the operator $P$, $k$ solves the following parabolic equation\footnote{$L_x k$ denotes the result of application of $L$ to $k$ w.r.t. variable $x$}:
\begin{equation}
\label{eq:operRicEq}
\begin{split}
  &\frac{\partial k}{\partial t} = L_x k + L_\gamma k + q^2(t,x)\delta(x-\gamma) - P(H^\star r (H k))\,, k(0,x,\gamma) = q_0^2(x)\delta(x-\gamma)\,, \\
& k(t,x,\gamma)=0\text{ for } (x,\gamma)\in\partial \Omega\times\partial\Omega\,.
\end{split}
\end{equation}
The estimate $\hat u$ defined by~\eqref{eq:filter} will be referred to as the \emph{minimax estimate} or \emph{minimax filter}. The worst-case mean-squared estimation error of the minimax estimate $\hat u$ is given by
\begin{equation}
  \label{eq:errorEst}
E(l(\hat u(T)-u(T))^2\le \sup_{(e_0,e)\in\Em, \eta\in\Eo}E(l(\hat u(T))-l(u(T)))^2 = \int_\Omega l(x)(Pl)(t,x)dx\,.
\end{equation}
Even though the minimax estimate $\hat u$ is optimal for the case of $L^2$-type uncertainties $\Em$ and $\Eo$, from the practical standpoint, the aforementioned approach of approximating the solution of \eqref{P1} by $\hat u$ has two major drawbacks:
\begin{itemize}
\item [A)] The approximation of $\Bm$ by $\Em$ is quite crude, especially if the measure of $\Omega$, $\ell(\Omega)$ or/and the final time $T$ are large, e.g., $\ell(\Omega),T>>1$.
\item [B)] Solving~\eqref{eq:filter} numerically, especially computing the Riccati operator $P$, becomes very expensive even for the case of two spatial dimensions.
\end{itemize}

\section{Localised interconnected filters}
\label{sec:DDproblem}
In order to address (A) above, namely, to provide a more accurate approximation of $\Bm$, $\Bo$, assume that $\Omega$ is split into a finite number of non-overlapping\footnote{By definition, $\Omega_1\in R^n$ and $\Omega_2\in R^n$ are non-overlapping if their intersection is of measure zero in $R^n$. } subsets $\Omega_i$ and define \[
\Bmi= \{ e_0(x),e(t,x): |e_0(x)| \leq q_0(x) , \; |e(t,x)|\le q(t,x), x\in\Omega_i \}\,.
\] It then follows that $(e_0,e)\in\Bm$ if and only if $(e_0,e)\in\Bmi$ for all $i$. In other words, a larger ``rectangle'' $\Bm$ equals to the union of smaller ``rectangles'' $\Bmi$ provided $\Omega=\cup\Omega_i$. The same holds true for $\Bo$. Hence, the aforementioned splitting does not ``increase the uncertainty''. In contrast, the $L^2$-ellipsoid $\Em$ does not possess such property simply because the union of ellipsoids is not an ellipsoid, generally speaking. 
Now, taking this representation into account, the following computational strategy is suggested:
\begin{itemize}
\item [1.] generate local problems by restricting the state equation, observation equation and $\Bm$, $\Bo$ to $\Omega_i$ and approximate the restrictions of $\Bm$, $\Bo$ to $\Omega_i$ by $\Emi$ and $\Eoi$ respectively;
\item [2.] employ an appropriate domain decomposition technique to ensure the continuity of the global solution, obtained by stitching together solutions of the local problems, across $\Omega$;
\item [3.] introduce the minimax filter for each local problem and discretize the local minimax filter by using FEM in space and midpoint/M\"{o}bius time integrator.
\end{itemize}
This computational strategy resolves (A) as the ``small'' ellipsoids $\Emi$ approximate the ``small'' rectangles $\Bmi$ and the union of the ellipsoids $\Emi$ is contained in the large ellipsoid~\eqref{modelContEllipsoid1} approximating the entire $\Bm$. Moreover, as noted, the large ``rectangle'' $\Bm$ equals to the union of smaller ``rectangles'' $\Bmi$ provided $\Omega=\cup\Omega_i$. In addition, (B) is also resolved since the computational cost of computing $P$ and $\hat u$ over a smaller domain $\Omega_i$ is reduced. The following section implements points 1.-3. In what follows the local minimax filters will be referred as local or localised filters.
The strategy of localisation that implements points 1.-3. is described in details in this section. More precisely, subsection~\ref{sec:filterDecomposition} shows how to restrict the state equation, the observation equation and $\Bm$, $\Bo$ to subdomain $\Omega_i$, and employ the iterative d-ADN Schwarz domain decomposition method; \ref{sec:local-minimax-filter} introduces the localised minimax estimate, subsection~\ref{sec:FEM} shows how to discretize the local problem by using the classical Finite Element Method (FEM) and how to discretize the local minimax filter; \ref{sec:reinitFilter} derives properties of the localised filters and finally \ref{sec:pseudo-observations} introduces the idea of the pseudo-observations and the localized strategy algorithm.

\subsection{Domain decomposition of the global problem} \label{sec:filterDecomposition}
Let the domain $\Omega$ be divided into $N$ non-overlapping domains $\Omega_1,\ldots, \Omega_N$ with $\Gamma_{i,j}=\partial \Omega_i \cap  \partial \Omega_j$ denoting the common boundary between them. $\Gamma = \cup_{ij}\Gamma_{ij}$ denotes their union (in the rest of this work referred to as the interface). In addition, the inflow and outflow parts of the $\Gamma_{i,j}$ and $\partial\Omega_i$  are defined below:
\[
\begin{split}
  \Gamma^{in}_{i,j} &= \{\bm{x} \in \Gamma_{i,j}: \mu(x) \cdot n(x) < 0\}\,,\\   \Gamma^{out}_{i,j} &= \{\bm{x} \in \Gamma_{i,j}: \mu(x) \cdot n(x) > 0\}\,,\\
\partial\Omega^{in}_{i} &= \{\bm{x} \in \partial\Omega_{i}: \mu(x) \cdot n(x) < 0\}\,,\\
\partial\Omega^{out}_{i} &= \{\bm{x} \in \partial\Omega_{i}: \mu(x) \cdot n(x) > 0\}\,,\\
\Gamma_i^{in} &=\{\Gamma_{i,j}^{in}:\Gamma_{i,j}^{in}\ne\varnothing\}\,,\\
\Gamma_i^{out} &=\{\Gamma_{i,j}^{out}:\Gamma_{i,j}^{out}\ne\varnothing\}\,.
\end{split}
\]
The continuous Global Problem~\eqref{eq:state} is approximated via a set of Local Problems referred to as the Decomposed Problem:
\begin{equation}\label{LocalProblemi}
  \left\{ \begin{array}{l}
   \frac{\partial u_i}{\partial t} = L_i u_i+f_i+e_i\\
   u_i(t,x) = 0 \text{,  on } \partial \Omega \cap \partial \Omega_i \\
   u_i(t,x) = u_j(t,x) \text{,  on } \Gamma^{in}_{i,j}\in \Gamma_i^{in} \\
   \frac{\partial u_i(t,x)}{\partial n} = \frac{\partial u_j(t,x)}{\partial n} \text{,  on } \Gamma^{out}_{i,j} \in \Gamma_i^{out}\\
   u_i(0,x) = u_{0,i}(x)+e_{0,i}(x)
  \end{array}\right.
\end{equation}
where the local operator $L_i$ is the restriction of the original operator $L$ on $\Omega_i$, and $f_i$, $e_i$, $u_{0,i}$ and $e_{0,i}$ are the restrictions of $f$, $e$, $u_{0}$ and $e_{0}$ onto $\Omega_i$, and $e_{0,i}$, $e_i$ belong to the restriction of $\Bm$ onto $\Omega_i$, namely
\begin{equation}\label{modelInfContEllipsoid_i}
\Bmi= \{ e_0(x),e(t,x): |e_0(x)| \leq q_0(x) , \; |e(t,x)|\le q(t,x) \text{ on } \Omega_i\}\,.
\end{equation}
In what follows, the problem~\eqref{LocalProblemi} will be referred to as the \emph{$i$-th local problem} and  $u_D$ is the solution of the Decomposed Problem if $u_D=u_i$ on $\Omega_i$. Clearly, the choice of the boundary conditions on the interface boundaries $\Gamma_{i}^{in}$ and $\Gamma_{i}^{out}$ guarantees the continuity of $u_D$ across the interface $\Gamma$. Boundary conditions on external boundaries $\partial \Omega \cap \partial \Omega_i$ are inherited from the global problem~\eqref{eq:state}. An obvious sufficient condition for the existence and uniqueness of a solution of the Decomposed Problem is proved in the following lemma:

\begin{lemma}\label{ExistenceUniqueness}
If $u_G$ is the unique solution of the Global Problem for some $e_0,e\in\Bm$ then it is the unique solution of the Decomposed Problem.
\end{lemma}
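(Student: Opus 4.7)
The plan is to prove both directions: first, that $u_G$ restricted to the subdomains furnishes a solution of the Decomposed Problem (existence), and second, that any solution of the Decomposed Problem must agree with $u_G$ on each $\Omega_i$ (uniqueness).

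For existence, I would set $u_i := u_G|_{\Omega_i}$ and take the natural restrictions $f_i, e_i, u_{0,i}, e_{0,i}$ of $f,e,u_0,e_0$ to $\Omega_i$. Since $u_G\in L^\infty(0,T,H_0^1(\Omega))$ solves \eqref{eq:state} in the weak sense, by testing against functions supported in $\Omega_i$ one immediately gets the local PDE $\partial_t u_i = L_i u_i + f_i + e_i$ in $\Omega_i$. The external boundary condition $u_i=0$ on $\partial\Omega\cap\partial\Omega_i$ comes from $u_G\in H_0^1(\Omega)$ and the initial condition $u_i(0,\cdot) = u_{0,i}+e_{0,i}$ is inherited directly. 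The interface conditions are automatic: the trace of $u_G$ is single-valued across $\Gamma_{i,j}$, hence $u_i = u_j$ on all of $\Gamma_{i,j}$ (in particular on $\Gamma_{i,j}^{in}$); and because $u_G$ is a weak $H^1$ solution, a standard integration-by-parts argument across $\Gamma_{i,j}$ shows that the conormal (diffusive) fluxes coincide, giving $\partial u_i/\partial n = \partial u_j/\partial n$ on $\Gamma_{i,j}^{out}$. Thus $u_G$ solves the Decomposed Problem.

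For uniqueness, I would take any solution $\{v_i\}_{i=1}^N$ of the Decomposed Problem and glue it into a global candidate $v_G$ by $v_G|_{\Omega_i}:=v_i$. The key point is to verify that $v_G \in L^\infty(0,T,H_0^1(\Omega))$ and satisfies the global weak formulation. The Dirichlet continuity $u_i=u_j$ on $\Gamma_{i,j}^{in}$, combined with the same condition written from $\Omega_j$'s perspective on $\Gamma_{j,i}^{in}$ (which is the complementary inflow portion of the shared interface), ensures that the traces of $v_i$ and $v_j$ agree on the entire common boundary $\Gamma_{i,j}$, so $v_G$ has a single-valued trace and thus belongs to $H_0^1(\Omega)$. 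The Neumann condition on $\Gamma_{i,j}^{out}$, combined likewise on the symmetric portion, ensures that when one sums the weak local formulations against any test function $\varphi\in H_0^1(\Omega)$, the boundary integrals on internal interfaces cancel pairwise. Consequently $v_G$ satisfies the weak form of the Global Problem with the same data $(e_0,e)$, and by the assumed uniqueness of $u_G$ one concludes $v_G=u_G$, hence $v_i = u_G|_{\Omega_i}$.

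The main obstacle is the interface bookkeeping: one must verify that the split of the shared boundary $\Gamma_{i,j}=\Gamma_{i,j}^{in}\cup\Gamma_{i,j}^{out}$ (with respect to the outward normal of $\Omega_i$) combined with the symmetric split from $\Omega_j$'s side covers $\Gamma_{i,j}$ twice in the right way, so that together the prescribed Dirichlet-on-inflow and Neumann-on-outflow conditions deliver both trace and flux continuity on the whole interface. A measure-zero characteristic set where $\mu\cdot n=0$ can be safely neglected by density. Once the interface matching is established, the rest of the argument reduces to the standard uniqueness for the weak formulation of \eqref{eq:state}, which holds by the uniform parabolicity of $L$ and the hypothesis of the lemma.
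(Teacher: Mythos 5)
Your proof is correct and, for the forward direction, follows the same route as the paper: restrict $u_G$ to each $\Omega_i$ and observe that the local PDE, the external boundary and initial conditions, and the interface conditions (trace continuity on $\Gamma^{in}_{i,j}$, flux continuity on $\Gamma^{out}_{i,j}$) are all inherited from the global weak solution. Where you go beyond the paper is the uniqueness half: the paper dismisses it as ``an obvious consequence,'' whereas you actually supply the argument --- gluing an arbitrary solution $\{v_i\}$ of the Decomposed Problem into a global candidate, using the fact that $\Gamma^{in}_{i,j}=\Gamma^{out}_{j,i}$ so that every point of the shared interface (off the characteristic set $\mu\cdot n=0$) carries a Dirichlet matching from one side and a Neumann matching from the other, hence both trace and flux continuity hold and the interface terms cancel in the summed weak formulation, after which uniqueness of the Global Problem finishes the job. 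This is the part of the lemma that actually needs proof, and your bookkeeping of the inflow/outflow splits is exactly the right observation to make it work.
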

\begin{proof}
Take $e_0,e\in\Bm$  and assume that $u_{Gi}$ denotes the restriction of $u_G$ onto the subdomain $\Omega_i$. It is obvious that $u_G$ satisfies all boundary conditions over the interface $\Gamma$. Thus, it remains to show that $u_{Gi}$ solves the $i$-th Local Problem. Since $u_{Gi}|_\Gamma=u_{G}|_\Gamma$, where $\Gamma$ is the interface, it follows that $u_{Gi}$ solves the $i$-th Local Problem. The uniqueness is an obvious consequence.
\end{proof}

The restriction of the observation equation is obvious:
\begin{equation}\label{eq:obs:i}
y_i(t,x) = H_i u_i(t,x)+\eta_i(t,x)\,,\quad H_iu_i(t,x) = \int_{\Omega_i} h(x-y)u_i(x,t)dx\,,
\end{equation}
where \begin{equation}\label{eq:obsnoise:i}
\Bo = \{\eta: E[\eta^2(t,x)r(t,x)] \leq 1, \quad x\in\Omega_i \}\,.
\end{equation}

The Decomposed Problem described above is an application of a Domain Decomposition (DD) technique, namely the Adaptive Dirichlet Neumann method~\cite{gastaldi_adn_1998}. Since advection-dominated flows are considered, a further modification of the formulation~\eqref{LocalProblemi} is necessary. Indeed, for the pure advection problems the outflow boundary conditions on $\Gamma^{out}_{i,j}$ are not required as it follows from the physical properties of the flow $\mu$. This suggests to  incorporate the hyperbolic nature of the problem into~\eqref{LocalProblemi} by imposing the homogeneous Neumann condition in~\eqref{LocalProblemi}, which leads to a damped ADN (d-ADN) decomposition. The latter is known to work well for advection dominated problems~\cite{ciccoli_adaptive_1996}. The actual computational scheme is then carried out by solving for $u_i$ over $\Omega_i$ and iterating until convergence, a so called iterative Schwartz approach~\cite{quarteroni_domain_1999}: specifically, it starts with a set of initial solutions $\{u_i^0\}$, and compute $\{u_i^{n+1}\}$ from $\{u_i^{n}\}$, $n \geq 0$ by solving numerically the following problem:
\begin{equation}\label{LocalIterProblemi}
  \left\{ \begin{array}{l}
   \frac{\partial u_i^{n+1}}{\partial t} = L_i u_i^{n+1}+e_i+f_i\\
   u_i^{n+1}(t,x) = 0 \text{,  on } \partial \Omega^{in} \cap \partial \Omega^{in}_i \\
   u_i^{n+1}(t,x) = u_j^n(t,x) \text{,  on } \Gamma^{in}_{i,j}\in \Gamma_i^{in} \\
   \frac{\partial u_i^{n+1}(t,x)}{\partial n} = 0 \text{,  on } \Gamma^{out}_{i,j}\in \Gamma_i^{out} \\
   u_i^{n+1}(0,x) = u_{0,i}(x)+e_{0,i}(x)
  \end{array}\right.
\end{equation}
Informally, the purpose of the Schwartz iterations defined in~\eqref{LocalIterProblemi} is to enforce the continuity of the solution of the problem ~\eqref{LocalIterProblemi} along the interfaces. Once this is achieved, the iteration process can be stopped. While this work does not study the rate of the convergence of the iterative Schwartz d-ADN method, it is worth to mention that, to the best of our knowledge, no such result could be found in the literature. In this regard, note that if the direction of the flow is constant, only one iteration of the Schwartz method is required. In the general case, it can be shown that the sequence $\{u_i^{n}\}$ converges weakly in $H^1(\Omega_1) \times... \times H^1(\Omega_N)$ to the unique solution of the Decomposed Problem $u_D$, provided the latter exists~\cite{gastaldi_adn_1998}.

Finally, $\Bmi$ and $\Boi$ are approximated by $L^2$-ellipsoids. Specifically, to approximate $\Bmi$, $\frac{e_0^2(x)}{q_0^2(x)} \le 1$ and $\frac{e^2(t,x)}{q^2(t,x)}\le 1$ are integrated over $\Omega_T$ to obtain the approximating ellipsoid $\Emi$ of the following form:
\begin{equation}\label{modelContEllipsoid:i}
\begin{split}
\Emi = & \{ e_0(x),e(t,x): \int_{\Omega_i}e_0^2(x)q^{-2}_0(x)d\Omega_i + \\
+ & \int_{\Omega_i\times (0,T)} e^2(t,x)q^{-2}(t,x) d\Omega_i dt \leq (T+1)\ell(\Omega_i) \}
\end{split}
\end{equation}
which obviously contains $\Bmi$. It needs to be stressed that the union of the ``small'' ellipsoids $\Emi$, approximating $\Bmi$ is contained in the large ellipsoid~\eqref{modelContEllipsoid1} approximating the entire $\Bm$. \\
Similarly, $\Eo$ is approximated by:
\begin{equation}\label{observContEllipsoid:i}
\Eoi = \{\eta: \int_{\Omega_i\times (0,T)} E\eta^2(t,x)r(t,x)d\Omega dt \leq (1+T)\ell(\Omega_i) \}
\end{equation}
Note that the worst-case estimation error may be quite large if $T>>1$ or $\ell(\Omega_i)>>1$. This problem is resolved below, in section~\ref{sec:reinitFilter}.

\subsection{Interconnected localized minimax filters}
\label{sec:local-minimax-filter}
In this subsection, the minimax estimate $\hat u_i^{n+1}$ of $u_i^{n+1}$, the solution of the $n$-th Schwartz iteration for $i$-th Local problem, is introduced given $y_i$, $u_i^n$, and assuming that $e_{0,i},e_i\in\Bmi$, and $\eta_i\in\Boi$. Sometimes, $\hat u_i^{n+1}$ will be referred to as the $(n+1,i)$-filter.

Note that each local solution $u_i^{n+1}$ is the sum of a ``mean'' local solution $w_i^{n+1}$ and noisy part $q_i$, i.e., $u_i^{n+1} = w_i^{n+1}+q_i$, provided $w_i^{n+1}$ solves
\begin{equation}\label{eq:wi}
\left\{ \begin{array}{ll}
\frac{\partial w_i^{n+1}}{\partial t} = L_iw_i^{n+1}+f_i\\
w_i^{n+1}(t,x) = 0 \text{,  on } \partial \Omega^{in} \cap \partial \Omega^{in}_i \\
   w_i^{n+1}(t,x) = w_j^n(t,x)+q_j(t,x) \text{,  on } \Gamma^{in}_{i,j}\in \Gamma_i^{in} \\
   \frac{\partial w_i^{n+1}}{\partial n} = 0 \text{,  on } \Gamma^{out}_{i,j}\in \Gamma_i^{out} \\
   w_i^{n+1}(0,x) = u_{0,i}(x)
\end{array}\right.
\end{equation}
and $q$ solves
\begin{equation}\label{eq:qi}
\left\{ \begin{array}{ll}
\frac{\partial q_i}{\partial t} = L_iq_i+e_i\\
q_i(t,x) = 0 \text{,  on } \partial \Omega^{in} \cap \partial \Omega^{in}_i \\
   q_i(t,x) = 0 \text{,  on } \Gamma^{in}_{i,j}\in \Gamma_i^{in} \\
   \frac{\partial q_i}{\partial n} = 0 \text{,  on } \Gamma^{out}_{i,j}\in \Gamma_i^{out} \\
   q_i(0,x) = e_{0,i}(x)
\end{array}\right.
\end{equation}
Note that $w_i^{n+1}$ depends linearly on $q_j$ and $ w_j^n$, hence the minimax estimate of $w_i^{n+1}$ is given by $\hat w_i^{n+1}$, the solution of~\eqref{eq:wi} which corresponds to $w_i^{n+1} = \hat u_j^n$ on $\Gamma^{in}_{i,j}\in \Gamma_i^{in}$, where $\hat u_j^n$ denotes the $(n,j)$-filter obtained on the $n$-th iteration of the Schwartz iterative procedure. Since $u_i^{n+1} = w_i^{n+1}+q_i$, it follows that $y_i = H_iu_i^{n+1}+\eta_i = H_iw_i^{n+1}+Hq_i+\eta_i$. Hence, the noisy part $q^{n+1}_i$  can be estimated from the shifted local measurements $\tilde{y}_i:=y_i-H_i\hat w_i^{n+1}$. It should be stressed that, although the noisy part $q^{n+1}_i$ is independent of the corresponding noisy parts $q_j^{n+1}$, its minimax estimate does depend on observations $\tilde{y}_i $ which, in turn, depend on $\hat w_j^n$, so $\hat q_j^{n+1}$ changes over the course of the Schwartz iterative procedure. That said, the minimax estimate $\hat u_i^{n+1}$ can be computed as the sum of $\hat w_i^{n+1}$ and $\hat q_i^{n+1}$, i.e.,
\begin{equation}
  \label{eq:uwq}
l_i(\hat u_i^{n+1}) = l_i(\hat w_i^{n+1}) + l_i(\hat q_i^{n+1})\,,
\end{equation}
where, analogously to Section~\ref{sec:ProblemStatement}, the minimax estimate $\hat q_i^{n+1}$ is represented as follows:
\begin{equation}
  \label{eq:MF_qi}
l(\hat q_i^{n+1}(T)) = \int_{\Omega_i\times (0,T)} \frac{r_i(t,x)}{(T+1)\ell(\Omega_i) } (H_i p_i)(t,x)\tilde y_i(t,x) dxdt
\end{equation}
provided $p_i$ and $z_i$ solve the following Hamiltonian system of equations:
\begin{equation} \label{eq:pzi}
\left\{ \begin{array}{l}
    \frac{\partial z_{i}}{\partial t} = -L^\star_i z_i + \frac{H_i^\star r_iH_ip_i}{(T+1)\ell(\Omega_i)} \\
    z_i(T,x) = l_i(x)  \text{ on } \Omega_i\\
    z_i(t,x) = 0 \text{ on } \partial \Omega^{in} \cap \partial \Omega^{in}_i\\
    z_i(t,x) = 0 \text{ on } \Gamma^{in}_{i,j}\in \Gamma_i^{in}\\
    \frac{\partial z_i}{\partial n} = 0 \text{,  on } \Gamma^{out}_{i,j}\in \Gamma_i^{out} \\
    \frac{\partial p_i}{\partial t} = L_ip_i + (T+1)\ell(\Omega_i) q_i^2z_i \\
    p_i(0,x) = (T+1)\ell(\Omega_i) q_{0,i}^2(x)z_i(0,x)  \text{ on } \Omega_i\\
    p_i(t,x) = 0 \text{ on } \partial \Omega^{in} \cap \partial \Omega^{in}_i\\
    p_i(t,x) = 0 \text{ on } \Gamma^{in}_{i,j}\in \Gamma_i^{in}\\
    \frac{\partial p_i}{\partial n} = 0 \text{,  on } \Gamma^{out}_{i,j}\in \Gamma_i^{out} \\
\end{array}\right.
\end{equation}
Here $l_i$ stands for the restriction of $l$ onto $\Omega_i$. The local worst-case mean-squared estimation error is given by \[
\sup_{(e_0,e)\in\Emi, \eta\in\Eoi}E(l_i(\hat u_i^{n+1}(T))-l_i(u_i(T)))^2 = l_i(p_i)\,.
\]
In the following section the $(n+1,i)$-filter will be discretized (in space) by using FEM.

\subsection{Finite Element Approximation for the  $(n+1,i)$-filter}
\label{sec:FEM}

Finite Element Method consists of (i) reformulating the problem~\eqref{LocalIterProblemi} in the weak form, and (ii) applying the Galerkin projection method to construct $\bm{u}_i^{n+1}=(u^{n+1}_{i1}(t)\dots u^{n+1}_{i N_{nd}^i}(t))^\top$, the FEM approximation of the solution $u_i^{n+1}$ in the so called FEM space: \[
u_i^{n+1}=\sum_{k=1}^{N_{nd}^i} u^{n+1}_{ik}(t)\phi_k+O(\frac{1}{(N_{nd}^i)^2})\text{ in } L^2(\Omega_i)\,.
\] provided $u_i^{n+1}\in H^2(\Omega)$. An important feature of the FEM approximation $\bm{u}_i^{n+1}$ is that it converges in $L^2(\Omega)$ but the coefficient $ u^{n+1}_{ik}(t)$ approximates $u_i^{n+1}(x_k,t)$, the value of $u_i^{n+1}$ at the FEM node $x_k$, provided $u_i^{n+1}$ is continuous in space. Here $\{x_s\}_{s=1}^{N_{nd}^i}$ represents a so-called FEM grid. The reader is referred to section~\ref{sec:appendix_FEM} where the detailed derivation of the FEM discretization for $i$-th local subproblem is provided for the case of two spatial dimensions. In what follows the FEM representation of the minimax estimate is derived and that converges to the continuous estimate provided the dimension of the FEM subspace, $N_{nd}^i$, increases.

The following notations are introduced: $\bm{u}_i^0$ is the FEM approximation of the restriction of $u_0$ onto $\Omega_i$, $\bm{l}_i$ is the FEM approximation of $l_i$, $\bm{M}_i$ is the local mass matrix, $\bm{S}_i$ is the local stiffness matrix (see~\eqref{locStiff}), $\bm{f}_i(t;\bm{\hat u}_j^n)$ is the local source vector (see~\eqref{locSource}), $\bm{\hat u}_j^n$ is the FEM approximation of $\hat u_j^n$. Moreover,  define
\begin{equation}
  \label{eq:FEM_matrices}
  \begin{split}
    \bm{C}_i&:=\{h(x_n-z_m)\}_{n,m=1}^{N_{nd}^i}\,,\quad \bm{R}_i=\operatorname{diag}(r_i(x_1)\dots r_i(x_{N_{nd}^i}))\,,\\
    \bm{Q}_i(t)&:=\operatorname{diag}(q_i^2(x_1,t)\dots q_i^2(x_{N_{nd}^i},t))\,,\quad
\bm{Q}_{0,i}(t):= \operatorname{diag}(q_{0,i}^2(x_1)\dots q_{0,i}^2(x_{N_{nd}^i}))\,,\\
\bm{y}_i &= (y_i(x_1,t),\dots, y_i(x_{N_{nd}^i}))^\top\,, \quad \gamma_{T,i}:=(T+1)\ell(\Omega_i)\,.
  \end{split}
\end{equation}
The following lemma provides the FEM approximation for the $(n+1,i)$-filter and its estimation error.
\begin{lemma}\label{l:MF_FEM}
The continuous minimax estimate $\hat u_i^{n+1}$ can be approximated as follows: for any $l_i\in L^2(\Omega_i)$ it holds
\begin{align}
&l_i(\hat u_i^{n+1})= (\bm{l}_i, \bm{\hat u}_i^{n+1}) +O(\frac{1}{(N_{nd}^i)^2})\,,   \label{eq:integral_est}\\
&\sup_{(e_0,e)\in\Emi, \eta\in\Eoi}E(l_i(\hat u_i^{n+1}(T))-l_i(u_i^{n+1}(T)))^2 = l_i(p_i) = (\bm{l}_i,\bm{P}_i(T)\bm{l}_i) +O(\frac{1}{(N_{nd}^i)^2})   \label{eq:integral_est_err}\\
\end{align}
where $\bm{\hat u}_i^{n+1}$ and $\bm{P}_i$ solve the following ODE:
\begin{equation}
\label{eq:filteri_FEM}
\begin{split}
  &\dfrac{d \bm{\hat u}_i^{n+1}}{dt} = \bm{S}_i \bm{M}_i^{-1}\bm{\hat u}_i^{n+1}
 + \gamma_{T,i}^{-1}\bm{P}_i \bm{C}_i^\top\bm{R}_i^{\frac 12}\bm{M}_i\bm{R}_i^{\frac 12}(\bm{y}_i - \bm{C}_i\bm{\hat u}_i^{n+1})
+ \bm{f}_i(t;\bm{\hat u}_j^n)\,,\\
& \dfrac{d\bm{P}_i}{dt} = \bm{S}_i \bm{M}_i^{-1} \bm{P}_i + \bm{P}_i \bm{M}_i^{-1}\bm{S}^\top_i + \gamma_{T,i}\bm{Q}_i^{\frac12}\bm{M}_i\bm{Q}_i^{\frac12}- \gamma_{T,i}^{-1}\bm{P}_i\bm{C}_i^\top\bm{R}_i^{\frac 12}\bm{M}_i\bm{R}_i^{\frac 12}\bm{C}_i\bm{P}_i\,,\\
&\bm{P}_i(0) = \gamma_{T,i}\bm{Q}_{0,i}^{\frac 12}\bm{M}_i\bm{Q}_{0,i}^{\frac 12}\,,\quad \bm{\hat u}_i^{n+1}(0)=\bf{u}_i^0\,.
\end{split}
\end{equation}
\end{lemma}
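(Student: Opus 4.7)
The plan is to apply the standard Galerkin projection onto the FEM subspace $V_h^i=\operatorname{span}\{\phi_1,\ldots,\phi_{N_{nd}^i}\}$ to the continuous filter equation~(\ref{eq:filter}) and to the kernel Riccati equation~(\ref{eq:operRicEq}) restricted to $\Omega_i$, identify the resulting finite-dimensional equations with~(\ref{eq:filteri_FEM}), and deduce the claimed rate from the Aubin--Nitsche duality argument combined with the standard $L^2$ FEM error estimate for linear parabolic equations.

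\textbf{First ODE (filter equation).} I would write~(\ref{eq:filter}) on $\Omega_i$ with the d-ADN boundary conditions of Section~\ref{sec:filterDecomposition} in weak form, expand $\hat u_i^{n+1}(t,x)=\sum_k \hat u_{ik}^{n+1}(t)\phi_k(x)$, and test against $\phi_m$. The time derivative produces $\bm{M}_i\,d\bm{\hat u}_i^{n+1}/dt$, the elliptic operator $L_i$ produces the stiffness term $\bm{S}_i\bm{\hat u}_i^{n+1}$ (as already carried out in Appendix~\ref{sec:appendix_FEM}), and the source/coupling term becomes $\bm{f}_i(t;\bm{\hat u}_j^n)$. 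Representing the observation kernel $h(x-y)$ sampled at FEM nodes by $\bm{C}_i$, the weight $r_i$ by $\bm{R}_i$, and the Riccati operator $P$ through its Galerkin kernel $k(t,x,\gamma)\approx\sum_{k,m}P_{km}(t)\phi_k(x)\phi_m(\gamma)$, the feedback term $PH_i^\star r_i(y_i-H_i\hat u_i^{n+1})$ discretises as $\bm{P}_i\bm{C}_i^\top\bm{R}_i^{1/2}\bm{M}_i\bm{R}_i^{1/2}(\bm{y}_i-\bm{C}_i\bm{\hat u}_i^{n+1})$ once the weighted $L^2(\Omega_i)$-inner product is identified with its FEM matrix; the normalisation $\gamma_{T,i}^{-1}$ is inherited from~(\ref{eq:pzi}). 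Premultiplying by $\bm{M}_i^{-1}$ yields the first line of~(\ref{eq:filteri_FEM}).

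\textbf{Matrix Riccati ODE.} Next I would apply the same Galerkin projection in both spatial arguments of~(\ref{eq:operRicEq}): $L_x k$ becomes $\bm{S}_i\bm{M}_i^{-1}\bm{P}_i$, $L_\gamma k$ becomes $\bm{P}_i\bm{M}_i^{-1}\bm{S}_i^\top$, the singular forcing $q^2(t,x)\delta(x-\gamma)$ integrated against the tensor basis collapses onto the diagonal and gives $\gamma_{T,i}\bm{Q}_i^{1/2}\bm{M}_i\bm{Q}_i^{1/2}$ after absorbing the volume factor, the composition $P(H^\star rHk)$ produces the Riccati quadratic $\gamma_{T,i}^{-1}\bm{P}_i\bm{C}_i^\top\bm{R}_i^{1/2}\bm{M}_i\bm{R}_i^{1/2}\bm{C}_i\bm{P}_i$, and $k(0,x,\gamma)=q_0^2\delta(x-\gamma)$ gives $\bm{P}_i(0)=\gamma_{T,i}\bm{Q}_{0,i}^{1/2}\bm{M}_i\bm{Q}_{0,i}^{1/2}$. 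A cleaner alternative that I would prefer is to derive this matrix ODE directly from the finite-dimensional LQR problem obtained by Galerkin-projecting the local minimax control problem onto $V_h^i$: then the matrix Riccati feedback comes from standard finite-dimensional linear--quadratic theory, avoiding the tensor-kernel manipulations entirely.

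\textbf{Error estimates and main obstacle.} For~(\ref{eq:integral_est})--(\ref{eq:integral_est_err}) I would write $l_i(\hat u_i^{n+1})-(\bm{l}_i,\bm{\hat u}_i^{n+1})=\int_{\Omega_i}l_i(x)\bigl(\hat u_i^{n+1}(T,x)-\sum_k\hat u_{ik}^{n+1}(T)\phi_k(x)\bigr)dx$ and invoke the Aubin--Nitsche duality lemma: under $H^2$-regularity of $\hat u_i^{n+1}$ (guaranteed by sufficient smoothness of the data and of the d-ADN interface traces supplied by the converged Schwartz iterate $\hat u_j^n$), the parabolic Galerkin approximation is $O(h^2)$ in $L^2(\Omega_i)$, matching the authors' stated rate under their convention for $N_{nd}^i$. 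The analogue for $l_i(p_i)=(l_i,Pl_i)$ reduces to FEM convergence of the Riccati operator itself, and this is the main obstacle: the nonlinear term $P(H^\star rHP)$ forces a Gronwall argument that must propagate Lipschitz stability of the Riccati flow uniformly on $[0,T]$ together with $H^2\otimes H^2$ regularity of the kernel $k$. The LQR route side-steps this delicate analysis at the cost of a density/closure argument identifying the discrete optimum with the Galerkin trace of the continuous one.
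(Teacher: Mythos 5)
Your route is genuinely different from the paper's. You propose to Galerkin-project the closed-form filter PDE \eqref{eq:filter} and the kernel Riccati equation \eqref{eq:operRicEq} restricted to $\Omega_i$, or alternatively to invoke finite-dimensional LQR theory for the projected control problem. The paper's appendix proof never touches \eqref{eq:operRicEq}: it takes the duality representation \eqref{eq:uwq}--\eqref{eq:pzi} as the \emph{definition} of $\hat u_i^{n+1}$, writes the FEM discretization of the linear Hamiltonian two-point boundary value system \eqref{eq:pzi}, introduces $\bm{P}_i$ a posteriori through the sweep identity $\bm{M}_i\bm{p}_i=\bm{P}_i\bm{z}_i$ (verified by direct substitution into the discrete Hamiltonian system), and then recovers the filter form in \eqref{eq:filteri_FEM} by a discrete integration by parts applied to the integral representation \eqref{eq:MF_qi}; the Schwartz coupling $\bm{f}_i(t;\bm{\hat u}_j^n)$ enters separately through the mean part of the splitting $\hat u_i^{n+1}=\hat w_i^{n+1}+\hat q_i^{n+1}$. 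What each approach buys: yours is shorter if the continuous filter/kernel-Riccati form on $\Omega_i$ is taken for granted, while the paper's stays entirely within linear equations, so the Riccati matrix and the feedback form emerge from finite-dimensional algebra and the error claims reduce to standard FEM convergence for linear parabolic systems.

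There are two places where your route must do work the paper avoids, and which you have not closed. First, the object the lemma concerns is defined in Section~\ref{sec:local-minimax-filter} through \eqref{eq:uwq}, \eqref{eq:MF_qi} and \eqref{eq:pzi}, not through a local analogue of \eqref{eq:filter}; the paper only asserts the global filter form (``it can be shown that''), and on $\Omega_i$ the d-ADN interface conditions and the shifted observations $\tilde y_i=y_i-H_i\hat w_i^{n+1}$ mean the local filter/kernel representation is itself something you would have to derive before projecting it --- which is essentially the equivalence the paper proves at the discrete level. Second, you correctly flag convergence of the Galerkin approximation of the nonlinear operator Riccati equation as the main obstacle but leave it open; note also that a tensor expansion $k\approx\sum_{k,m}P_{km}\phi_k(x)\phi_m(\gamma)$ tested against $\phi_a\phi_b$ yields mass matrices on both sides (an equation of the form $\bm{M}_i\dot{\bm{P}}\bm{M}_i=\cdots$), so matching the paper's convention, in which $\bm{P}_i$ maps nodal values of $l_i$ to the moment vector $\bm{M}_i\bm{p}_i$, requires a change of variables you have not carried out. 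Your LQR alternative is the closer in spirit to what the paper actually does and would be the cleaner way to finish.
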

Equation~\eqref{eq:filteri_FEM} represents the FEM approximation of the $(n+1,i)$-filter. It has two ``correctors'': the first one steers the $(n+1,i)$-filter towards the observed data, and the second one, $\bm{f}_i(t;\bm{\hat u}_j^n)$ enforces the continuity across the interfaces between the subdomains. The proof of the lemma is given in the appendix right after the detailed description of the FEM discretization.


\subsubsection{Pointwise estimates}
\label{sec:pointwise-estimates}
It is stressed that~\eqref{eq:integral_est_err} and~\eqref{eq:integral_est} provide integral estimates as $l_i(\hat u_i^{n+1}) =\int_{\Omega_i} l_i(x) u_i^{n+1}(x,T)dx$. \\
Indeed the estimate of $(\bm{l}_i, \bm{M}_i\bm{u}_i^{n+1}(T))$, the discrete version of $l_i(\hat u_i^{n+1})$, is given by $(\bm{l}_i, \bm{\hat u}_i^{n+1}(T))$ so that, in fact, $\bm{\hat u}_i^{n+1}(T)$ provides and estimate of $\bm{M}_i\bm{u}_i^{n+1}(T)$, the vector of projections of $ u_i^{n+1}$ onto the FEM subspace  $L:=\operatorname{lin}(\{\phi_s\})$: $\bm{M}_i\bm{u}_i^{n+1}(T) = (\langle u_i^{n+1},\phi_1\rangle_{L^2(\Omega_i)}\dots \langle u_i^{n+1},\phi_{N_{nd}^i}\rangle_{L^2(\Omega_i)})^\top$.  It turns out that, thanks to the properties of the FEM approximation, one can employ the estimate of $l_i(\hat u_i^{n+1})$ to get an estimate of $u_i^{n+1}(x_s,T)$. Indeed, $\bm{u}_i^{n+1} = (u_i^{n+1}(t,x_1)\dots u_i^{n+1}(t,x_{N_{nd}^i}))^\top$, and so, as noted above, $\bm{M}_i^{-1}\bm{\hat u}_i^{n+1}(T)$ provides the estimate of $\bm{u}_i^{n+1}(T)$. More specifically, the $s$-th component of $\bm{M}_i^{-1}\bm{\hat u}_i^{n+1}$ provides an estimate of $u_i^{n+1}(t,x_s)$.


The estimation error of the aforementioned pointwise estimate is computed here. The straightforward approach, i.e., to use with $\bm{l}_i:=\bm{M}_i^{-1}\bm{l}_i^s$ with $\bm{l}_i^s=(0\dots 1\dots0)^\top$ does not provide a meaningful estimate as in this case \[
(\bm{l}_i^s, \bm{u}_i^{n+1}-\bm{M}_i^{-1}\bm{\hat u}_i^{n+1})\le (\bm{l}_i^s,\bm{M}_i^{-1}\bm{P}_i(T) \bm{M}_i^{-1} \bm{l}_i^s)^\frac 12
\]
and $(\bm{l}_i^s,\bm{M}_i^{-1}\bm{P}_i(T) \bm{M}_i^{-1} \bm{l}_i^s)$, the $s$-th element on the diagonal of the Riccati matrix $\bm{P}_i(T)$ grows unbounded. Indeed, since $(\bm{l}_i,\bm{P}_i(T)\bm{l}_i) $ approaches $l_i(p_i)$ when the dimension of the FEM subspace, $N_{nd}^i$ increases, and components of vector $\bm l_{ik}=l_i(x_k)$ does not depend on $N_{nd}^i$, it follows that the components of the matrix $\bm{P}_i$ must decay. On the other hand, $\bm{P}_i\bm{l}_i = \bm{d}_i = \bm{M}_i\bm{p}_i$ and so $\bm{M}_i^{-1}\bm{P}_i(T)\bm{l}_i = \bm{p}_i(T)$ and $\bm{p}_i(T)$ approaches $p_i(T)$ when $N_{nd}^i$ increases. Hence, the components of $\bm{M}_i^{-1}\bm{P}_i(T)$ are bounded for any $N_{nd}^i$. As a result, $\bm{M}_i^{-1}\bm{P}_i(T) \bm{M}_i^{-1} $ grows unbounded together with $\bm{M}_i^{-1}$ when $N_{nd}^i$ increases. Note that $\bm{M}_i^{-1}\bm{l}_i^s$ grows unbounded for any $s$ as it represents the ``FEM approximation'' of the Dirac measure $\delta(x-x_s)$ which has infinite $L^2(\Omega_i)$ norm. When the dimension of the FEM subspace increases, $\bm{M}_i^{-1}\bm{l}_i^s$ gets closer and closer to $\delta(x-x_s)$ (in the weak sense), and thus its $L^2$-norm grows. To overcome this, one should use a different error estimate, namely
\begin{equation}
  \label{eq:ME_i_FEM}
E(\bm{l}_i^s, \bm{u}_i^{n+1}-\bm{M}_i^{-1}\bm{\hat u}_i^{n+1})\le (\bm{l}_i^s,\bm{P}_i(T) \bm{M}_i^{-1}\bm{l}_i^s)^\frac 12\,.
\end{equation}
The rationale behind this is as follows: as noted above, $\bm{M}_i^{-1}\bm{P}_i(T)\bm{l}_i = \bm{p}_i(T)$ and $\bm{p}_i(T)$ approaches $p_i(T)$. Hence, the components of $\bm{M}_i^{-1}\bm{P}_i(T)$ are bounded for any $N_{nd}^i$. Even though one cannot derive~\eqref{eq:ME_i_FEM} directly as the proposed framework is optimal for the integral estimates like~\eqref{eq:integral_est_err} and~\eqref{eq:integral_est},  the validity of~\eqref{eq:ME_i_FEM} is confirmed by the numerical experiments (see Figure~\ref{e2_ricFEMInc}).

\subsection{$(n+1,i)$-filter with reinitialisation}
\label{sec:reinitFilter}

It easy to check that the minimax estimate $\bm{\hat u}_i^{n+1}$ is invariant with respect to the uniform rescaling of the ellipsoids $\Eoi$ and $\Emi$. Indeed, by examining~\eqref{eq:filteri_FEM} it is easy to find that multiplying $\bm{P}_i$ by a positive constant $\alpha$ is the same as dividing $\bm{Q}_{0,i}$, $\bm{Q}_i$ and $\bm{R}_i$ by this same $\alpha$ which implies the aforementioned invariance. This observation is used to further mitigate the error of approximating $\Bmi$, $\Boi$ by $\Emi$ and $\Eoi$. As it follows from the equation for $\bm{P}_i$ in~\eqref{eq:filteri_FEM}, the matrices $\bm{Q}_{0,i}^{-1}$, $\bm{Q}_i^{-1}$ and $\bm{R}_i$ 
are multiplied by the same constant, $\gamma_{T,i}^{-1}=\frac{1}{(T+1) \ell(\Omega_i)}$. It should be stressed that, for large $T>>1$ or large subdomains with $\ell(\Omega_i) >>1$, the error of approximating $\Bmi$, $\Boi$ by $\Emi$ and $\Eoi$ might become critical (see Figure~\ref{ric_gl_loc}): indeed, as it follows from~\eqref{eq:integral_est_err}, larger Riccati matrix $\bm{P}_i$ corresponds to larger estimation error; on the other hand, small $\frac{1}{(T+1) \ell(\Omega_i)}$ neutralize the impact of the quadratic term in the Riccati equation and amplifies the contribution of the source term. Hence, it is particularly important to keep the factor $\frac{1}{(T+1) \ell(\Omega_i)}$ as close as possible to $1$. To this end, one needs to design the domain decomposition of $\Omega$ so that $\ell(\Omega_i) \le 1$. In addition, thanks to the Markovian property of $\bm{\hat u}_i^{n+1}$, the size of the estimation horizon $T$ can be taken as small as required. Indeed, $\Bmi$ and $\Boi$ are uniform both in time and space, and therefore a decomposition technique may be applied in time. Namely, assuming that $\ell(\Omega_i) \le 1$ one can take any $T:=\varepsilon>0$, compute $\bm{\hat u}_i^{n+1}$ over $(0,\varepsilon)$ by using the recipe of lemma~\ref{l:MF_FEM}, and then
computing the estimate for $(k \varepsilon, (k+1)\varepsilon)$, dividing $\bm{Q}_i^{-1}$ and $\bm{R}_i$ by $1+\varepsilon\approx 1$ and starting the Riccati equation from $(1+\varepsilon)\bm{P}_i(k\varepsilon)$ in order to compute the estimate for the next window $((k+1) \varepsilon, (k+2)\varepsilon)$. It turns out that the proposed reinitialisation procedure allows to drastically reduce the impact of the error of approximating $\Bmi$, $\Boi$ by $\Emi$ and $\Eoi$ (see Figure~\ref{ric_gl_loc}).

\subsection{Pseudo-observations}
\label{sec:pseudo-observations}

It should be noted that the interconnections between the local filters $\hat{\bm{u}}^{n+1}_i$ are implemented by means of the source terms $\bm{f}_i(t;\hat{\bm{u}}_j^n)$: as a result the information from the interface (1D set in our case) is spread around in the domain and affects the nodes of the local estimate $\hat{\bm{u}}^{n+1}_i$ which are not necessarily close to the aforementioned interface. This, in turn, allows to push the information brought by observations $\textbf{y}_i$ on the domain $\Omega_i$ to the internal FEM nodes of the adjacent domains. The algorithm for computing $\hat{\bm{u}}^{n+1}_i$ is summarized in~\eqref{localisedalgorithm}.

On the other hand, the impact of observations on a local estimate depends on the structure of the local observation matrix $\bm{C}_i$. Specifically, if the observations $y(x,t)$ are localized at a specific region (e.g., $h$ has compact support within a subdomain of $\Omega$) of the global domain $\Omega$, it is possible that $h$ vanishes over a number of subdomains $\Omega_i$. In this case $\bm{C}_i=0$. This, in turn, may impact the uncertainty propagation associated with the local filters. Indeed, as it follows from~\eqref{eq:filteri_FEM}, the so-called innovation term $\gamma_{T,i}^{-1}\bm{P}_i \bm{C}_i^\top\bm{R}_i^{\frac 12}\bm{M}_i\bm{R}_i^{\frac 12}(\bm{y}_i - \bm{C}_i\bm{\hat u}_i^{n+1})$
disappears, provided $\bm{C}_i=0$. In this case, the impact of model errors from $\Omega_i$ is, in fact, neglected as the proposed procedure cannot communicate the corresponding information to the Riccati matrices on the adjacent subdomains. In this case, the local estimation error represented by means of the discrete Riccati operator $\bm{P}_i\bm{M}_i^{-1}$ may be underestimated.

A possible solution used in this work is to introduce "pseudo" observations: namely, the Dirichlet data that comes from the adjacent subdomains can be treated as "pseudo" observations. In this way, the impact of the model errors on adjacent domains can impact the estimate $\hat{\bm{u}}^{n+1}_i$. However, it is stressed that the Riccati equation is not affected even in this case. The reader is referred to the following section for numerical assessment of the proposed localised filtering strategy.

\begin{algorithm}
\caption{Algorithm of localised minimax filter method}
\label{localisedalgorithm}
\begin{algorithmic}
\REQUIRE
\STATE $T \;\;\;\;\;\;\;\;\;\;\;\;\;\;\;\;\;\;\;\;\;\;\;\;\;\;\;\;\;$ // number of time steps
\STATE $globalproblem \;\;\;\;\;\;\;\;\;$  // description of global physical problem
\STATE $errorlevel \;\;\;\;\;\;\;\;\;\;\;\;\;\;\;$     // acceptable level of Schwartz iteration error
\STATE GetInterfaceError() // computes the difference between estimates on the interface
\STATE $\;\;\;\;\;\;\;\;\;\;\;\;\;\;\;\;\;\;\;\;\;\;\;\;\;\;\;\;\;\;\;$ // nodes obtained from adjacent subdomains
\STATE $subproblems$ = DecomposeProblem($globalproblem$)
\FOR {$t=1$ \TO $T$}
  \FOR {$subdomain$ \textbf{in} $subdomains$}
    \STATE DiscretizeSubproblemByFem($subproblem$, t)
    \STATE UpdateBoundaryData($subproblem$, $subproblems$, t)
    \IF {HasObservations($subproblem$)}
	  \STATE InitObservations($subpoblem$, $t$)
	\ELSE
	  \STATE InitPseudoObservations($subpoblem$, $t$)
	\ENDIF
    \STATE SolveRiccatiEquation($subproblem$, $t$)
    \STATE SolveFilterEquation($subproblem$, $t$)
  \ENDFOR
  \STATE
  \STATE $error$ = GetInterfaceError($subproblems$, $t$)
  \WHILE {$error > errorlevel$}
    \FOR {$subdomain$ \textbf{in} $subdomains$}
      \STATE UpdateBoundaryData($subproblem$, $subproblems$, $t$)
      \STATE SolveFilterEquation($subproblem$, $t$)
    \ENDFOR
    \STATE $error$ = GetInterfaceError($subproblems$, $t$)
  \ENDWHILE
\ENDFOR
\end{algorithmic}
\end{algorithm}

\section{Numerical Experiments} \label{sec:NumericalExperiment}
The efficacy of the interconnected minimax filters is illustrated here with a set of numerical examples. First, a discrete in time representation of~\eqref{eq:filteri_FEM} is constructed. Note that the matrix Differential Riccati Equation (DRE) for $\bm{P}_i$ in~\eqref{eq:filteri_FEM} requires non-standard numerical integration techniques: for example, a standard explicit Runge Kutta (RK) method fails to integrate through the singularities~\cite{schiff_natural_1999}. One way to overcome this issue is to apply the M\"{o}bius Transformation that maps the DRE into its Hamiltonian representation, that can be effectively solved by symplectic midpoint method with reinitialisation at each time step~\cite{frank_symplectic_2014}. Following~\cite{frank_symplectic_2014}  the discrete in time system of linear Hamiltonian equations is introduced:
\begin{equation}
\label{eq:riccatiMP}
\left(
  \begin{matrix}
    \bm{U}_{k+1}\\\bm{V}_{k+1}
  \end{matrix}
\right)
= 2\left(
    \begin{matrix}
      I-\frac{h}2 \bm{M}_i^{-1}\bm{S}_{i,k+0.5} && \frac{h}2 \bm{B}_i\\
      \frac{h}2 \bm{D}_{i}  && I-\frac{h}2 (\bm{M}_i^{-1}\bm{S}_{i,k+0.5})^T
    \end{matrix}
\right)^{-1}
\left(
  \begin{matrix}
    \bm{P}_{i,k}\\I
  \end{matrix}
\right)-
\left(
  \begin{matrix}
    \bm{P}_{i,k}\\I
  \end{matrix}
\right),
\end{equation}
where \[
\begin{split}
    \bm{D}_i &= \bm{M}_i\bm{C}_{i} \bm{R}_{i,k+0.5}^{\frac 12}\bm{M}_i\bm{R}_{i,k+0.5}^{\frac 12} \bm{C}_{i}\bm{M}_i,\\
    \bm{B}_i &= \bm{M}^{-1}_i\bm{Q}^{\frac 12}_{i,k+0.5}\bm{M}_i\bm{Q}^{\frac 12}_{i,k+0.5}\bm{M}^{-1}_i.
\end{split}
\]
Here, subscript $k$ denotes the index of the points of the uniform time discretization with the step $h$. Subscript $k+0.5$ means that the corresponding matrix or vector is evaluated in the midle of the time interval $[t_k,t_{k+1}]$.

The $i$-th local Riccati matrix is found as $\bm{P}_{i,k+1}=\bm{U}_{k+1}\bm{V}_{k+1}^{-1}$ for $k>0$ and $\bm{P}_{i,0}=\bm{Q}_{0,i}^{-1}$. The aforementioned Hamiltonian system is then solved by using the symplectic midpoint method for the following reason: it was pointed out in~\cite{ZhukSISC13} that the time discretization of the filter equation and DRE must preserve quadratic invariants, e.g., non-stationary Lyapunov functions, which motivates one to apply the symplectic midpoint method, a symplectic implicit RK-method of second order.
This said, the equation for $\bm{u}_i^{n+1}$ (see~\eqref{eq:filteri_FEM}) is discretised as follows:
\begin{equation}
\label{eq:feedbackMP}
\begin{split}
  \hat{\bm{u}}^{n+1}_{i,k+1} & = -\hat{\bm{u}}_{i,k}^{n+1}
  + (I-\bm{M}_i^{-1}\bm{S}_{i,k+0.5} + G\bm{M}_i\bm{C}_{i})^{-1} \\
  & \times \left[2\hat{\bm{u}}_{i,k}^{n+1} + \bm{M}_i^{-1}\bm{f}_{i,k+0.5}(\hat{\bm{u}}_{j,k+0.5}^n) \right.\\
  & + G \left. \left(\bm{y}_{i,k+0.5}(t) - \frac 12 \bm{M}_i\bm{C}_{i}\hat{\bm{u}}_{i,k}^{n+1} \right)
  \right], \\
  \hat{\bm{u}}_{i,0}^{n+1} & = \bm{u}_{0,i}\,
\end{split}
\end{equation}
where \[
G = \frac12 \left(\bm{P}_{i,k} + \bm{P}_{i,k+1} \right)
  \bm{M}_i\bm{C}_{i} \bm{R}_{i}^{\frac 12}\bm{M}_i\bm{R}_{i,k+0.5}^{\frac 12}.
\]

The fully discrete interconnected localised minimax filters~\eqref{eq:feedbackMP}-~\eqref{eq:riccatiMP} are then iterated according to the Algorithm 1 in order to obtain the estimate of a solution of the linear advection dominated equation in two spatial dimensions in a set of two idealised experiments: one with a stationary flow field and another one with a non-stationary periodic flow field. In both experiments  the localised filters are compared  against the ground-truth and, in the second experiment, the localised filters are also compared to the global (non-decomposed) minimax filter, i.e., the standard minimax filter which approximates $\Bm$ and $\Bo$ by $\Em$ and $\Eo$, and does not use domain decomposition and reinitialization. This latter comparison illustrates the following points:
\begin{itemize}
\item $L^2$ non-decomposed filter does overestimate uncertainty which makes it of little or no use in practise,
\item interconnected localised minimax filters provide quite accurate uncertainty estimates in the considered examples,
\item drastic reduction of the computational cost in the case of localised filters.
\end{itemize}
\subsection{Experiment 1}
\begin{figure}
    \centering
    \includegraphics[width=\linewidth, height=18mm]{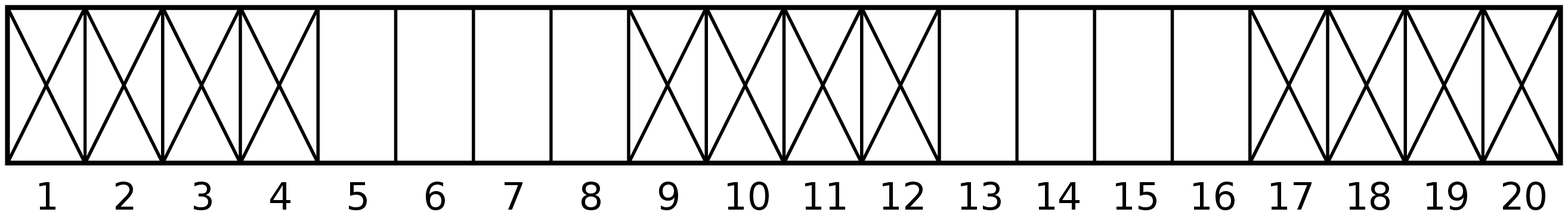}
    \caption{Configuration of Domain Decomposition and sensor locations for the first experiment.}
    \label{e1_conf}
\end{figure}

\begin{figure}
\centering
\begin{minipage}{0.49\textwidth}
  \includegraphics[width=\linewidth]{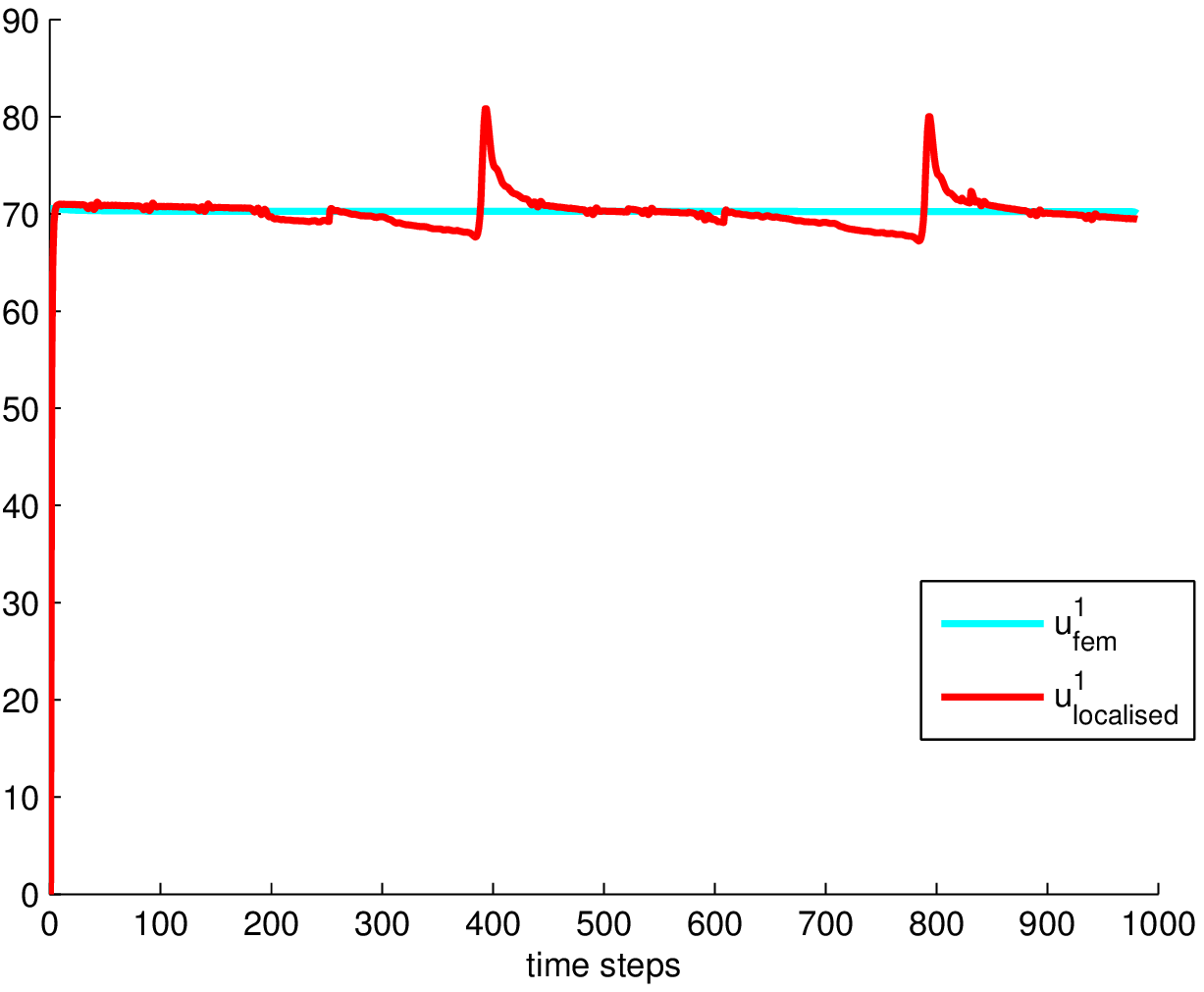}
  \caption{Spatial norm of the localised filter estimate $u_{localised}$ and mono domain FEM solution $u_{fem}$ (ground-truth spatial norm $\approx70$). }
  \label{e1_sol_norm}
\end{minipage}
\begin{minipage}{0.49\textwidth}
  \includegraphics[width=\linewidth]{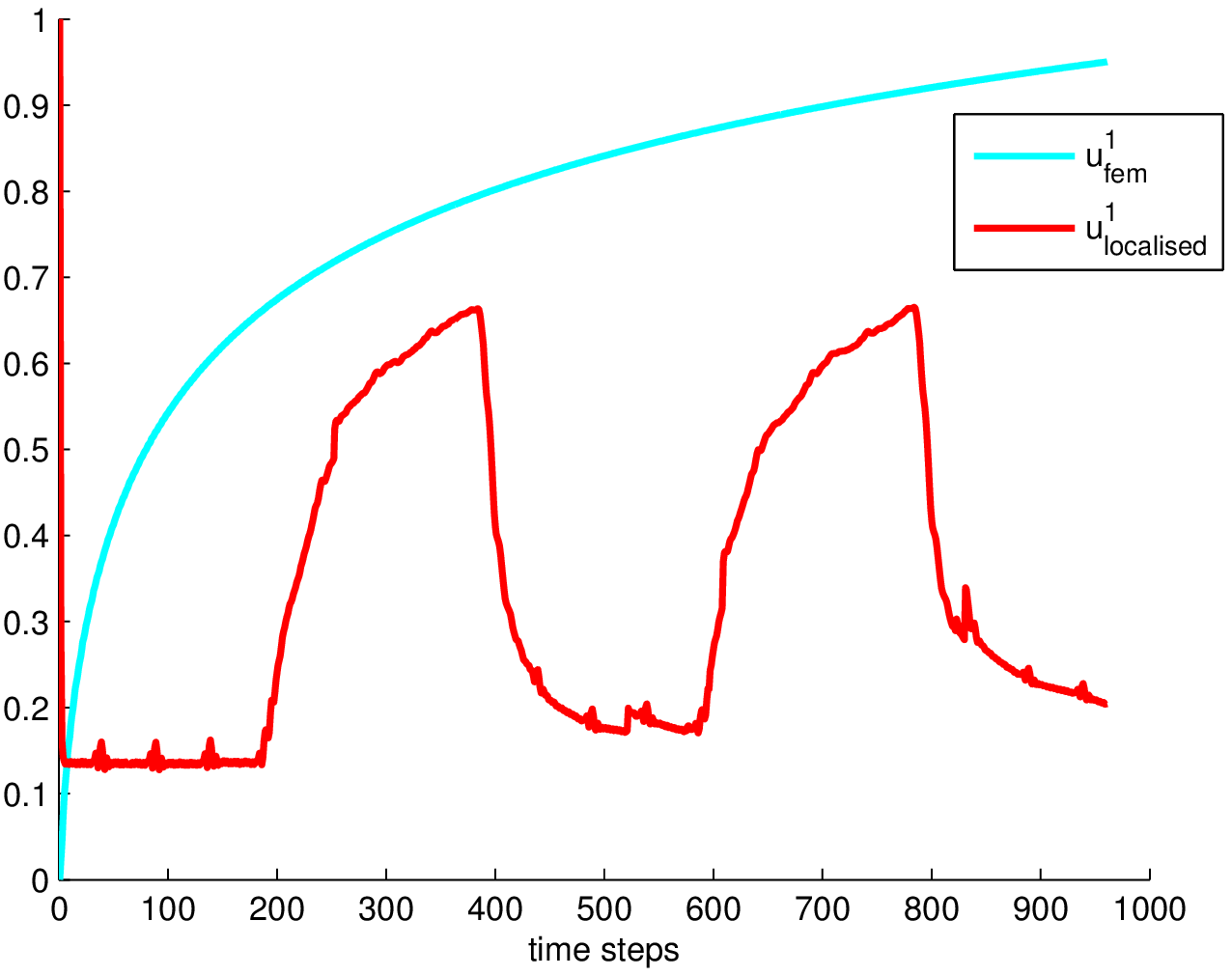}
  \caption{Spatial error of the localised filter estimate and mono domain FEM solution.}
  \label{e1_sol_er}
\end{minipage}
\end{figure}

\textbf{FEM discretization.} In this experiment a two dimensional rectangular domain of the size $[0,1]\times[0,20]$ is further discretized by $4500$ bilinear finite elements. DD is applied by decomposing the domain into 20 subdomains over the x-axis of the equal size (see Figure~\ref{e1_conf}) $[0,1]\times[0,1]$ and discretized by $225$ finite elements each. The underlying flow field is defined by the constant vector-function $\mu=[0.2; 0]$ and the constant diffusion coefficient $\epsilon=10^{-5}$. The timestep is taken to be $0.1$ and the length of the simulation is set to be $1000$ time steps allowing the concentration to completely transition from the right to the left of the domain. Note that the resulting FEM model is quite imprecise in that it quickly diverges from the analytical solution which is available in this case. This has been made intentionally in order to illustrate that the localised filters can improve the quality of the numerical solutions by using observed data and without knowing the initial conditions.

\textbf{Observations.} Define the following two-dimensional Gaussian function:
\begin{equation}
    \label{GaussianFunc}
    u_a(x,y,t) = \frac{1}{\sigma^2 2\pi} e^{-\frac{1}{2} \left(\frac{x-x_0-m_x}{\sigma}\right)^2} e^{-\frac{1}{2}\left(\frac{y-y_0-m_y}{\sigma}\right)^2}
\end{equation}
where $\sigma$, $m_x$ and $m_y$ are the diffusion and advection of the initial concentration $u_a(x,y,0)$; $x_0$ and $y_0$ define its center. Let $u^1_a(x,y,t)$ be the function as in~\eqref{GaussianFunc} with parameters $\sigma = 0.06+2t\epsilon = 0.06 + 0.02t$, $m_x = t*\mu_x = 200t$, $m_y = t*m_y = 0$, $x_0 = 0.25$, $y_0 = 0.25$. It is not difficult to check that the function $u^1_a(x,y,t)$ satisfies the original advection-diffusion equation~\eqref{eq:state} with the idealised flow field $\mu$ defined as above. In what follows, it serves as a ground-truth, and, in particular, the observations are sampled by restricting $u^1_a(x,y,t)$ onto the nodes in subdomains $\Omega_i$, $i\in I_{obs}:=\{1\dots4, 9\dots12, 17\dots20\}$ (see Figure~\ref{e1_conf}). This is achieved by setting $h(y-x)=\delta(y-x)$ for $y\in\Omega_i$, and $0$ for $y\not\in\Omega_i$, $i\in I_{obs}$. As a result, the observation matrix $\bm C_i$ consists of the rows of the inverted mass matrix $\bm M_i$ if the corresponding FEM node is observed and $0$ row otherwise. In fact, the matrix product $\bm M_i \bm C_i$ is a diagonal with its components equal to $1$ for the observed nodes and $0$ otherwise. The observations are corrupted by the observation noise with values uniformly distributed within the interval $[-0.5;0.5]$. The statistical characterization of this noise is given below.

\textbf{Uncertainty description.} The ellipsoids are chosen as defined by the functions $q_0$, $q$ and $r$, constant in time and space. Hence, $\bm{Q}^{\frac 12}_{i,k+0.5}\bm{M}_i\bm{Q}^{\frac 12}_{i,k+0.5} = \bm{Q}_i \bm{M}_i$ and the matrix $\bm{Q}_i=0.1I$ where $I$ is the identity matrix.  This choice reflects the low trust in the FEM model and, in a similar fashion, the absence of initial conditions is taken into account by defining $\bm{Q}_{0,i}=0.1I$.  The weighting matrix $\bm{R}_i$ is also diagonal: $\bm{R}_i=\operatorname{diag}(R_{i,1}\dots R_{i,N_{nd}^i})$. This means that the second moments of the observation noise, $\eta_i$ are required to verify the following inequality:
\begin{equation}
\label{eq:noiseDesc}
    \sum_{j=1}^{N_{nd}^i} R_{i,j}E(\eta_i^j)^2\le \Delta t \ell(\Omega_i)
\end{equation}
Here the Lebesgue measure of the subdomain $\ell(\Omega_i)=1$ and $\Delta t = 0.1$ is the size of the reinitialisation interval. Furthermore, $R_{i,j}=12$, i.e., the reciprocal of the variance of the $[-0.5;0.5]$-uniformly distributed random variable. It should be noted that our model of the observations noise is a robust version of the conventional statistical noise description, i.e., a realisation of any random variable $\eta_i$, which satisfies~\eqref{eq:noiseDesc}, could, in principle, ``corrupt'' the ``true concentration''. As a result, the proposed estimator is robust with respect to errors in second moment approximations, and the matrix $\bm{R}_i$ quantifies the magnitude of the moment approximation errors: roughly speaking, large/small $R_{i,j}$ restricts/loosens the admissible set of $\eta_i$.

Consequently, the estimate generated by Algorithm 1, $u_{\text{localised}}$, is compared against $u^1_a$ and $u_{fem}$ by applying the following error metrics:
\begin{itemize}
\item Spatial norm: $n_s(u)(t)= \|u\|$
\item Spatial error: $e_s(u)(t)= \frac{\|u - u_a\|}{\|u_a\|}$
\item Estimation error: $e_e(u)=\frac{\int_{0}^{T}\|u(t) - u_a(t)\| dt}{\int_{0}^{T}\|u_a\|dt}$
\end{itemize}

In Figures~\ref{e1_sol_norm} and \ref{e1_sol_er} the spatial norm and the spatial error of the localised filters are compared against the non-decomposed (mono-domain) FEM solution $u_{fem}$ of the problem with the exact initial condition ${u^1}_{a}(x,y,0)$. Figure~\ref{e1_sol_norm} shows that the spatial norm of the ground-truth is estimated correctly by $u_{fem}$. The localised filters $u_{localised}$ tend to estimate the norm correctly as well. The spikes in the graph happen when the spill enters a subdomain equipped with sensors (subdomains 10-12 and 17-20). Figure \ref{e1_sol_er} shows that, as it was expected, the $u^1_{fem}$ quickly diverges from the ground-truth due to the high model error and quite large time step, in contrast to $u^1_{localised}$ which start to diverge only when the concentration leaves the subdomains with sensors. The latter is due to the fact that the observation operator is zero over those subdomains  (subdomains 5-8 and 13-16) and the filters are driven by the erroneous FEM model only. The respective estimation errors are $e_e(u^1_{\text{fem}})=78\%$ and  $e_e(u^1_{\text{localised}})=39\%$.

\subsection{Experiment 2}
\begin{figure}[htpb]
  \subcaptionbox{Observations: $dt=25$, rel. err. 84.4\%.\label{e2_obs25}}%
{\includegraphics[width=6cm,keepaspectratio]{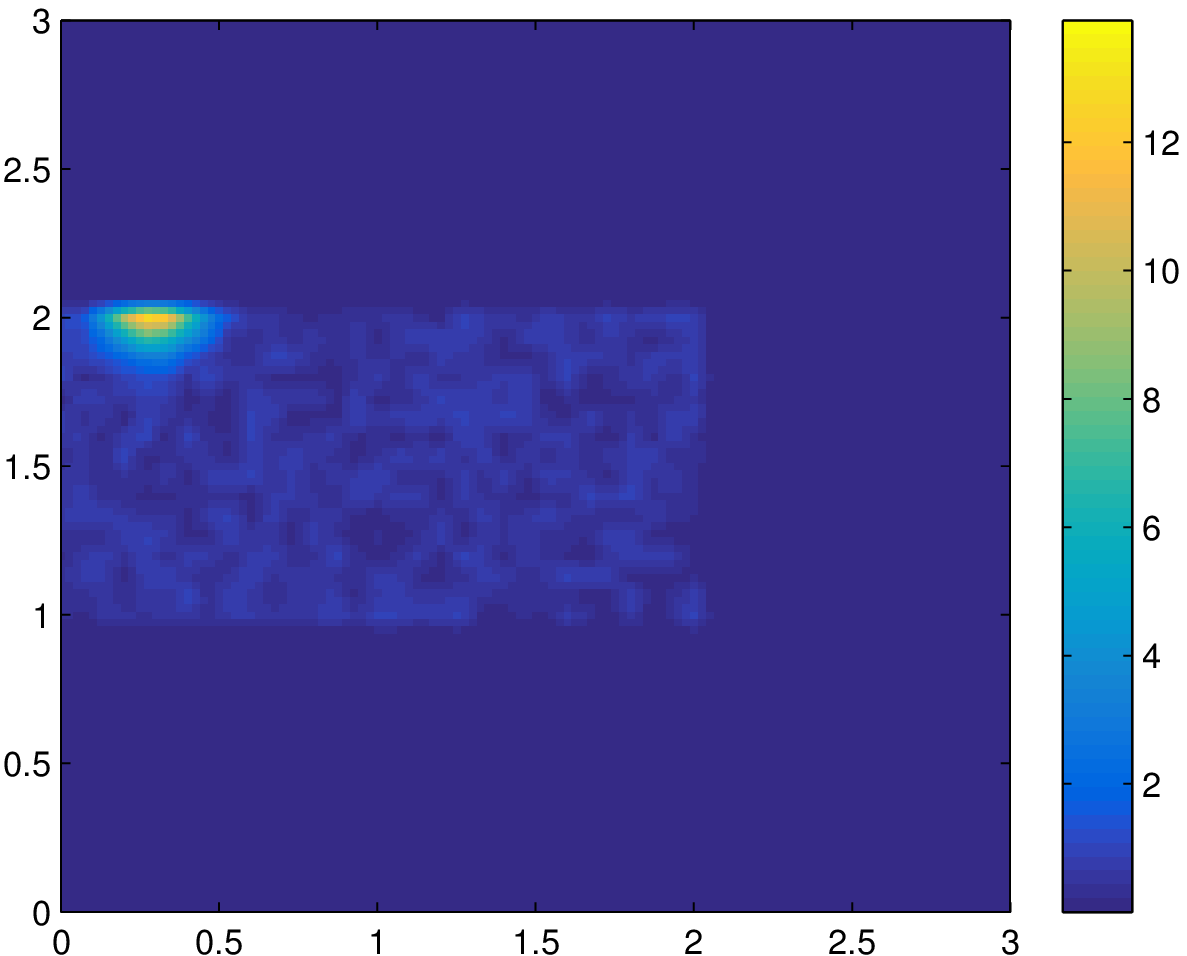}}\hfill
\subcaptionbox{Loc.estimate: $dt=25$, rel. err. 10.8\%.\label{e2_est25}}%
{\includegraphics[width=6cm,keepaspectratio]{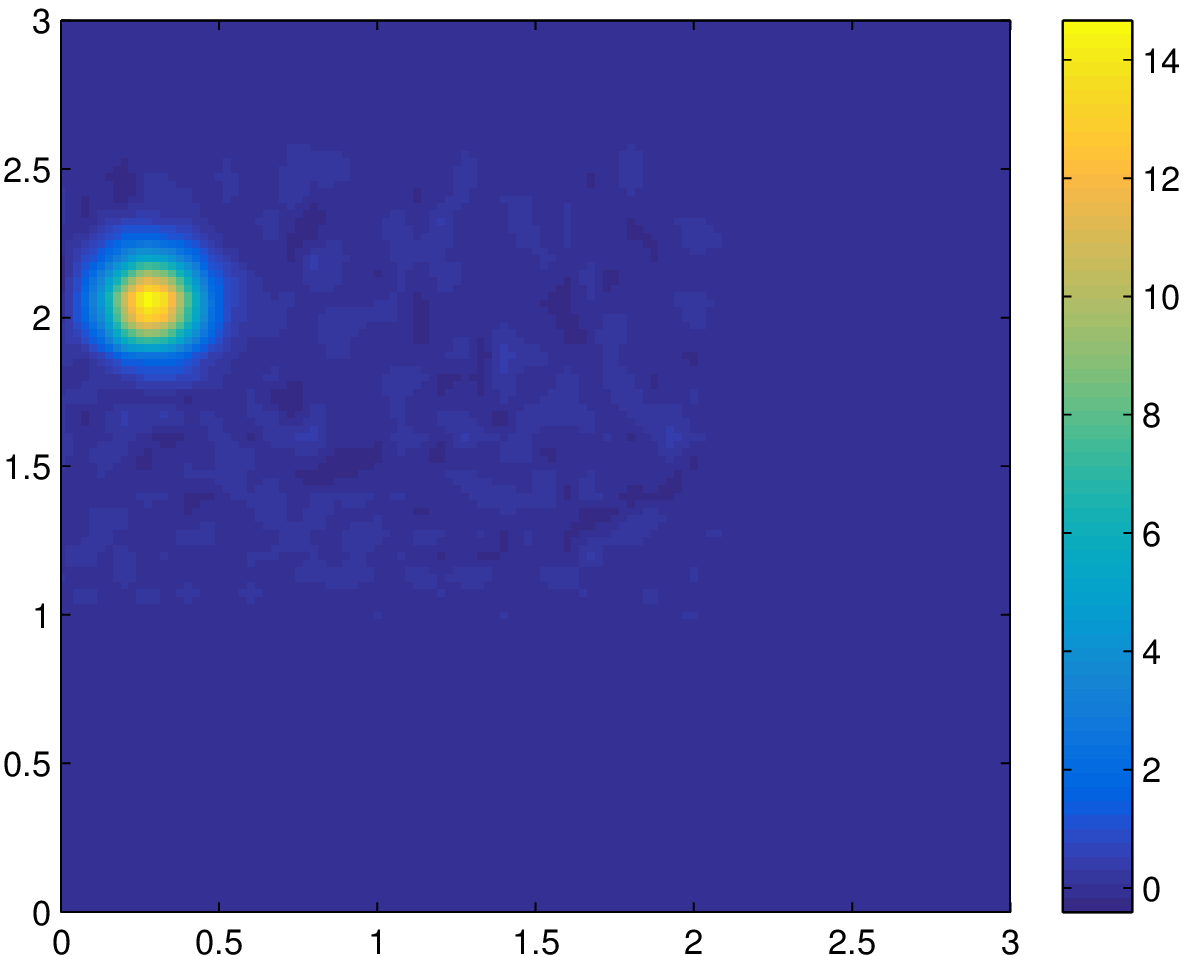}}

\subcaptionbox{Observations: $dt=180$, rel. err. 68.3\%. \label{e2_obs180}}%
{\includegraphics[width=6cm,keepaspectratio]{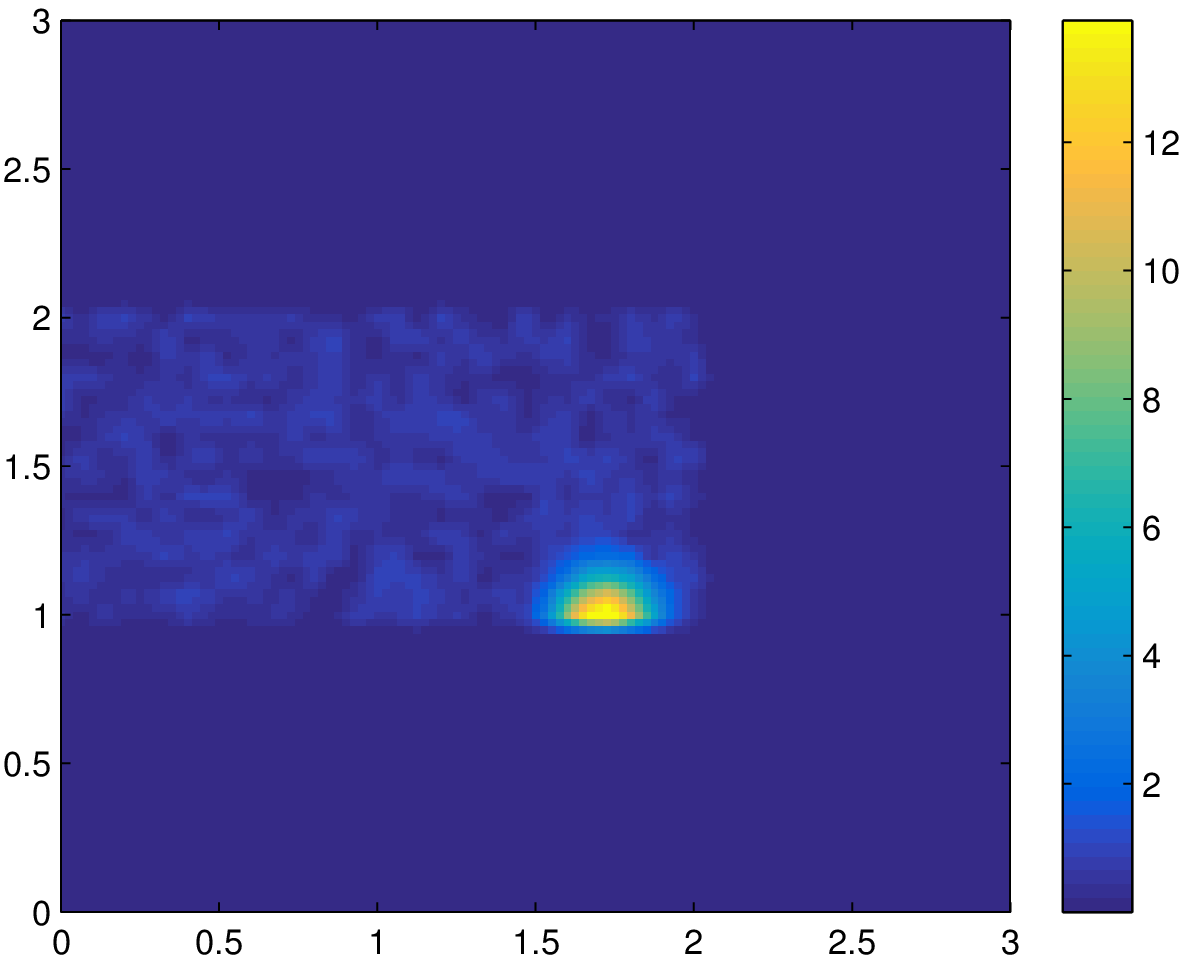}}\hfill
\subcaptionbox{Loc.estimate: $dt=180$, rel. err. 10.3\%. \label{e2_est180}}%
{\includegraphics[width=6cm,keepaspectratio]{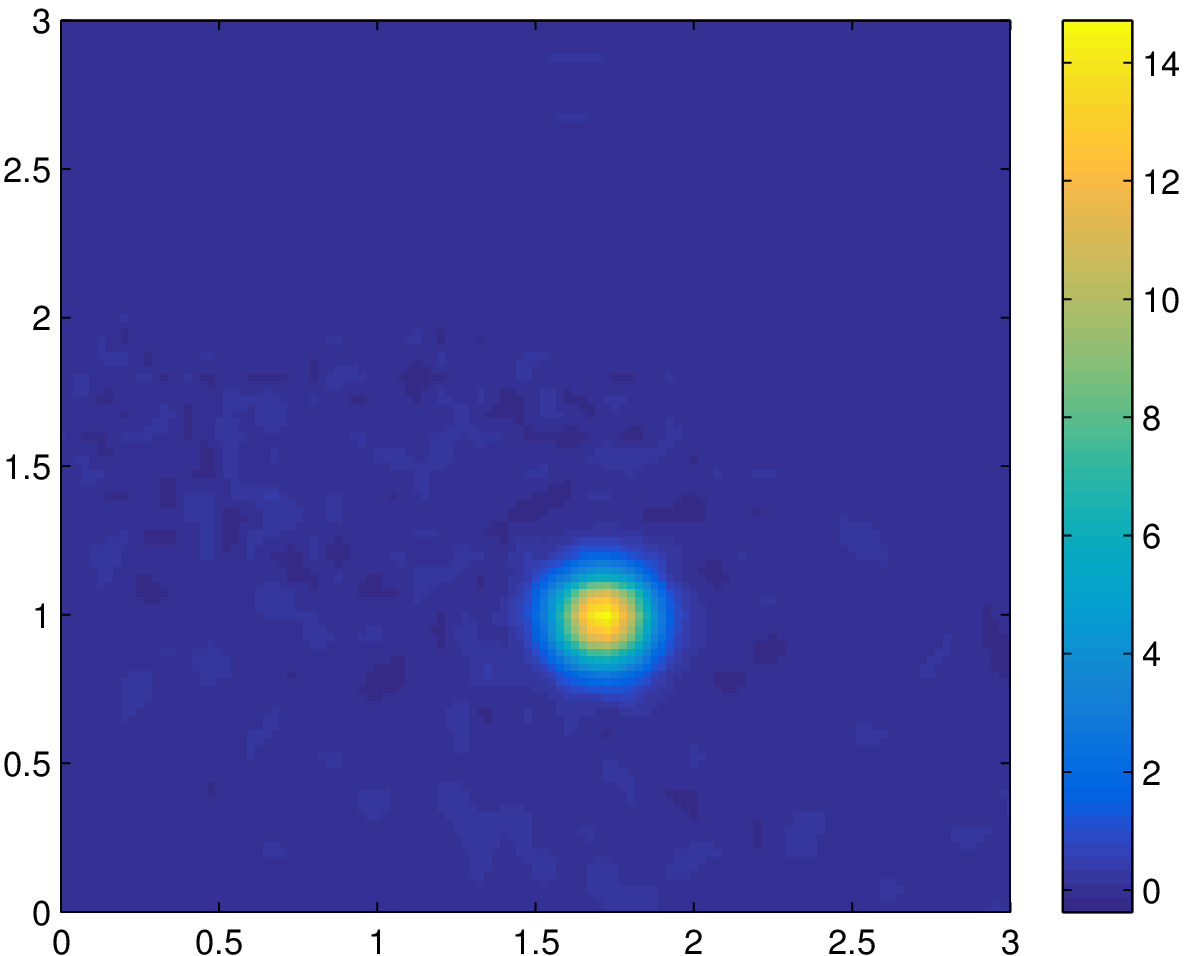}}

\subcaptionbox{Config. of Experiment 2.\label{e2_conf}}%
{\includegraphics[width=5cm,keepaspectratio]{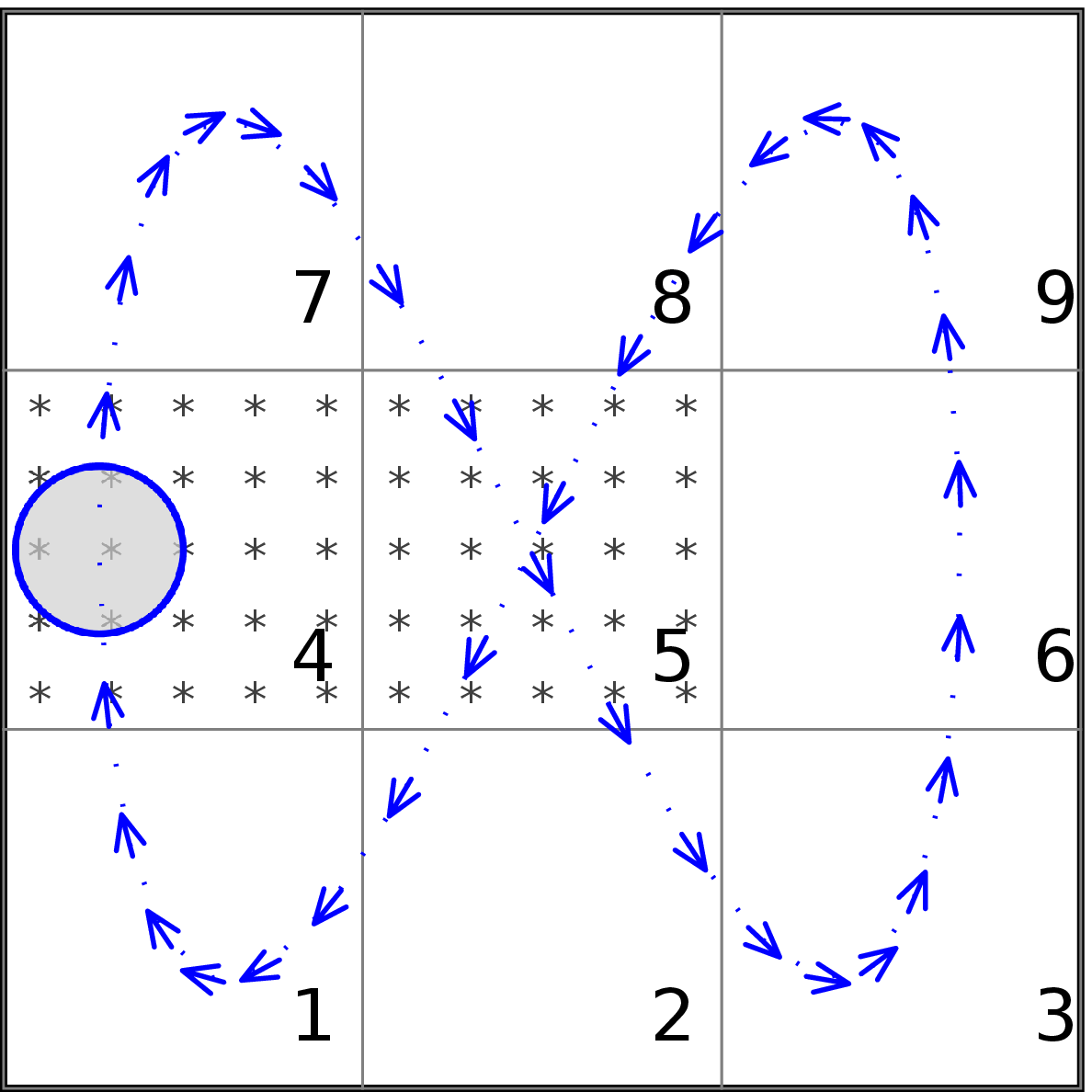}}\hfill
\subcaptionbox{Spatial error of the localised/global filter estimates.\label{e2_filter_err}}%
{\includegraphics[width=6cm,keepaspectratio]{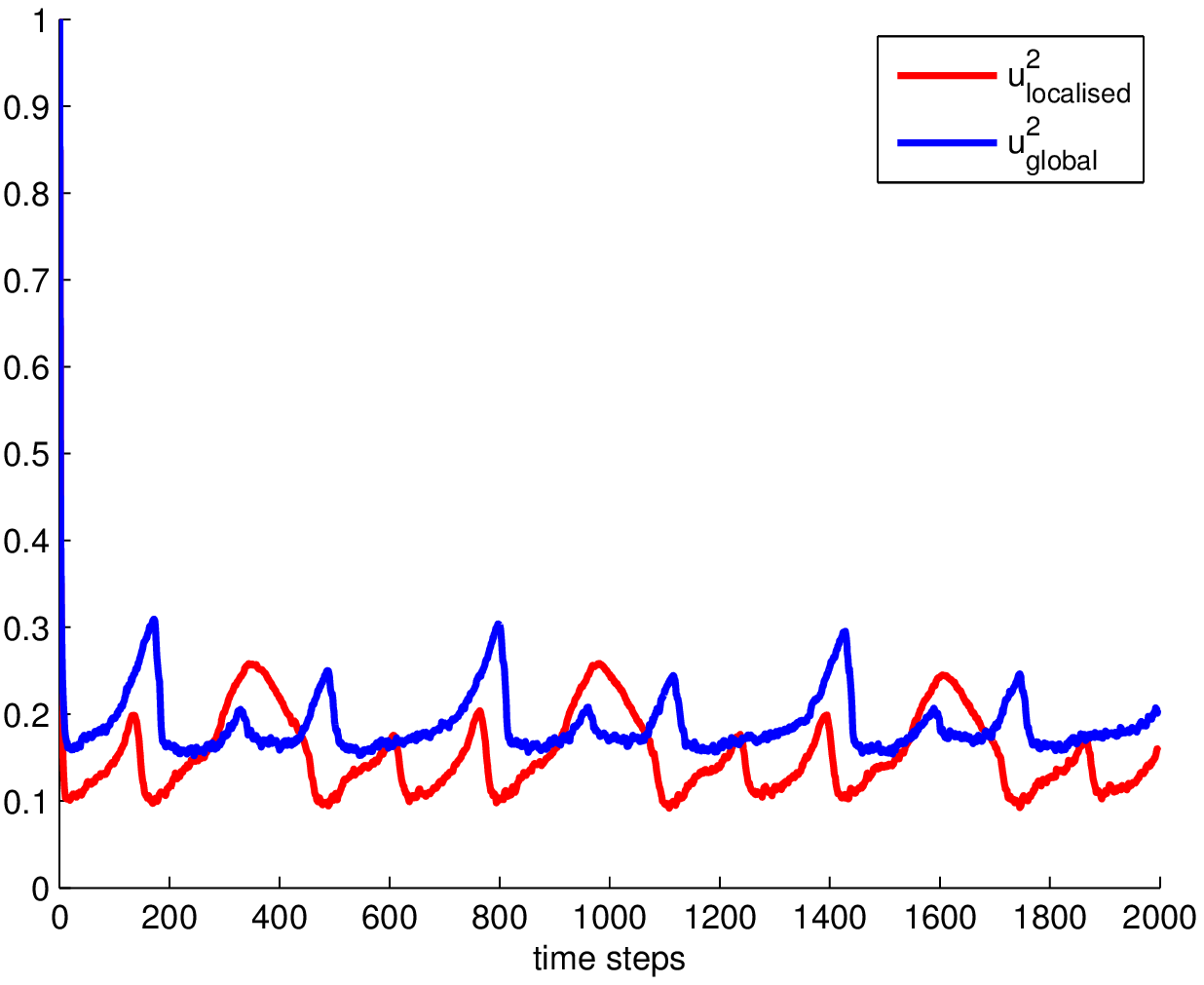}}
\end{figure}
\begin{figure}[htpb]
\subcaptionbox{The estimate of the localised filter, global filter and analytical solution computed at the point $x=1.4, y=1.4$ plotted over time steps $[135,185]$.\label{est_gl_loc}}%
{\includegraphics[width=6cm,keepaspectratio]{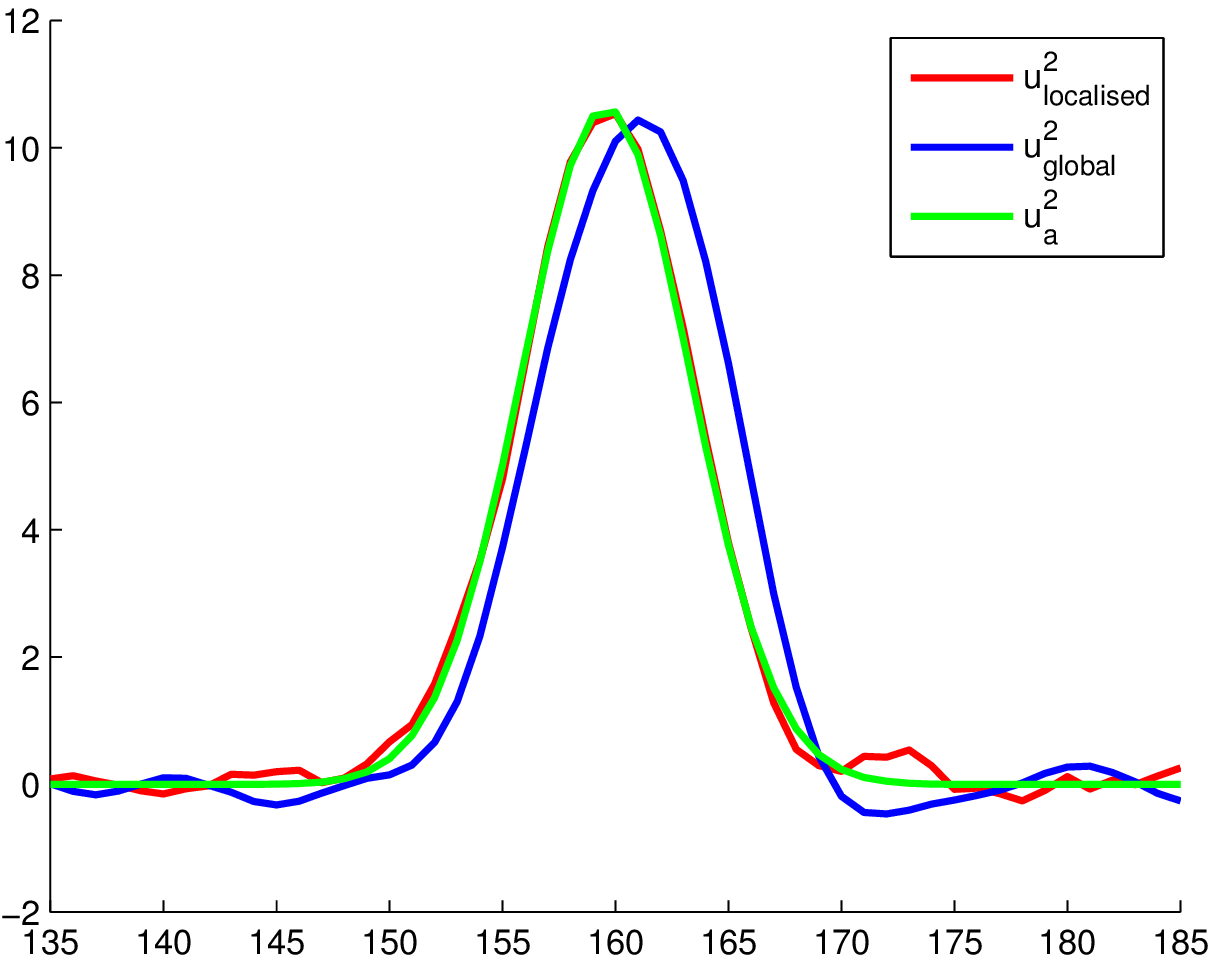}}\hfill
\subcaptionbox{Components of the localised and global Riccati operator corresponding to the point $x=1.4, y=1.4$ plotted over time steps $[0,300]$.\label{ric_gl_loc}}%
{\includegraphics[width=6cm,keepaspectratio]{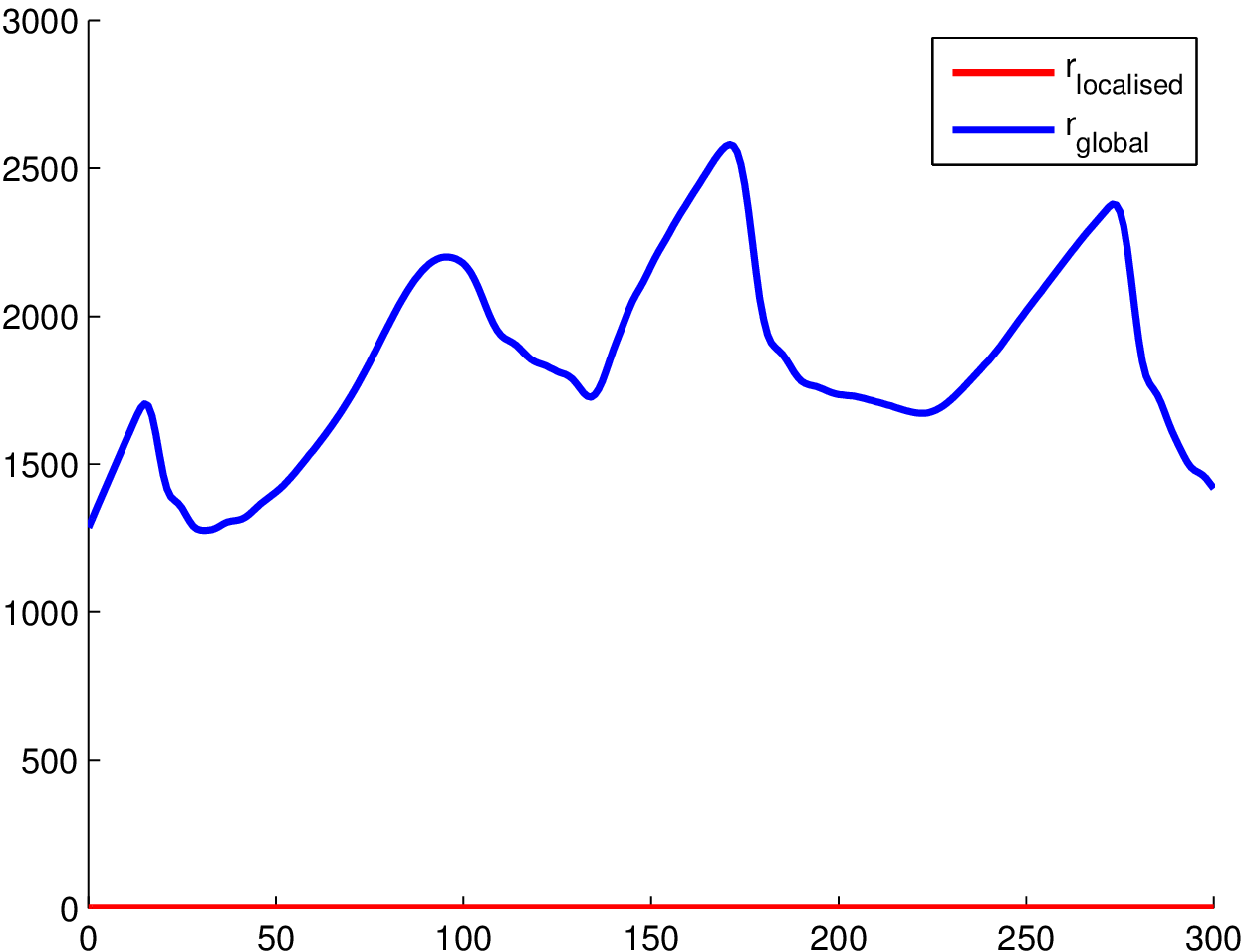}}

\subcaptionbox{The estimate, ellipsoid of the estimate and analytical solution computed at the point $x=1.4, y=1.4$ plotted over time steps $[135,185]$.\label{est_elips}}{\includegraphics[width=6cm,keepaspectratio]{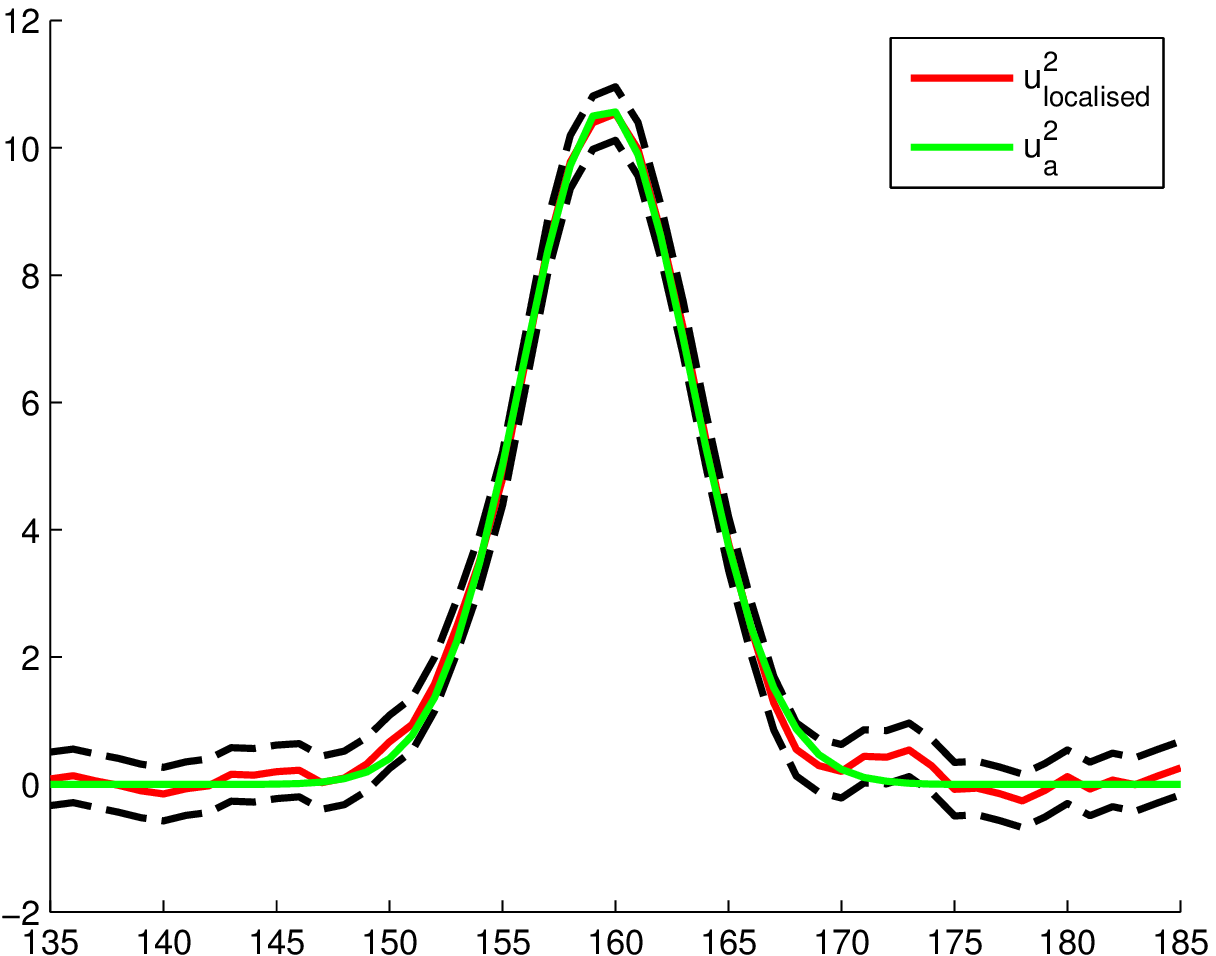}}\hfill
\subcaptionbox{Components of the Riccati operator corresponding to the point $x=1.4, y=1.4$ computed by localised filter with reinitialisation intervals 0.1 and 1 plotted over time steps $[0,300]$.\label{ric_reinit}}{
\includegraphics[width=6cm,keepaspectratio]{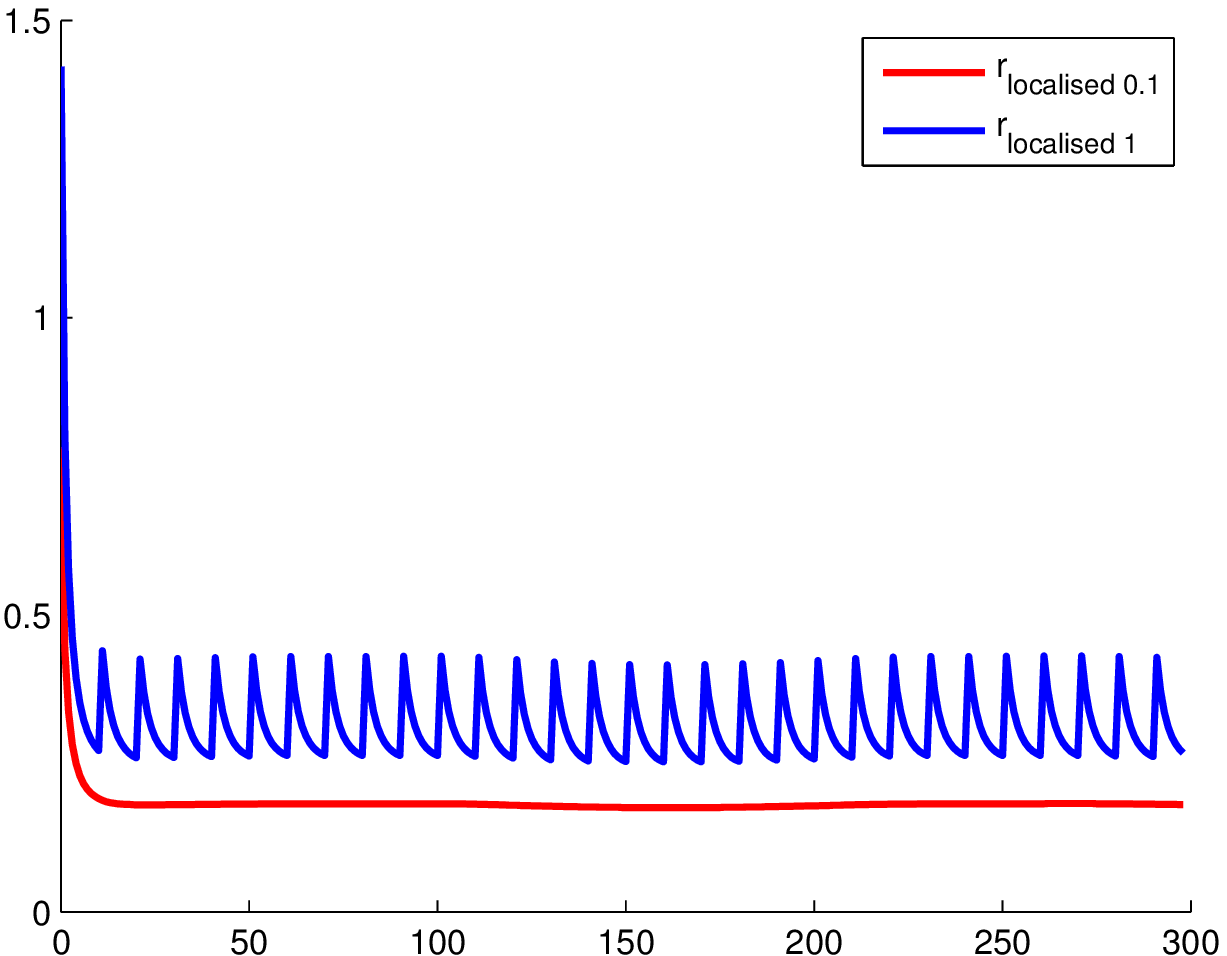}}

\subcaptionbox{Components of the Riccati operator corresponding to the point $x=1.4, y=1.4$ computed by localised filter with 225 and 900 FEM elements per subdomain plotted over time steps $[0,300]$. \label{e2_ricFEMInc}}{\includegraphics[width=6cm,keepaspectratio]{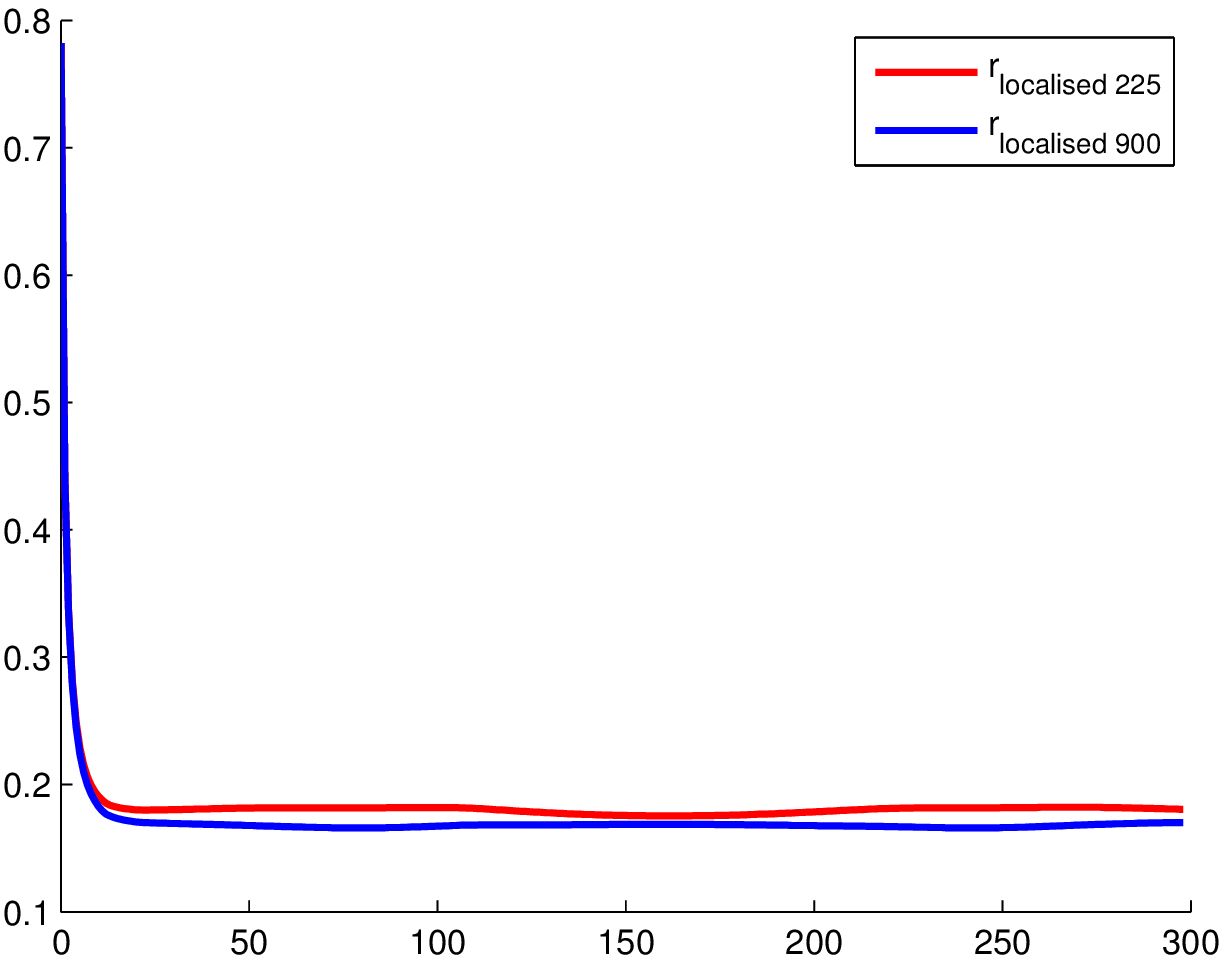}}\hfill
\subcaptionbox{Computational time taken for problems with different number of subdomains.\label{glinScale}}{
\includegraphics[width=6cm,keepaspectratio]{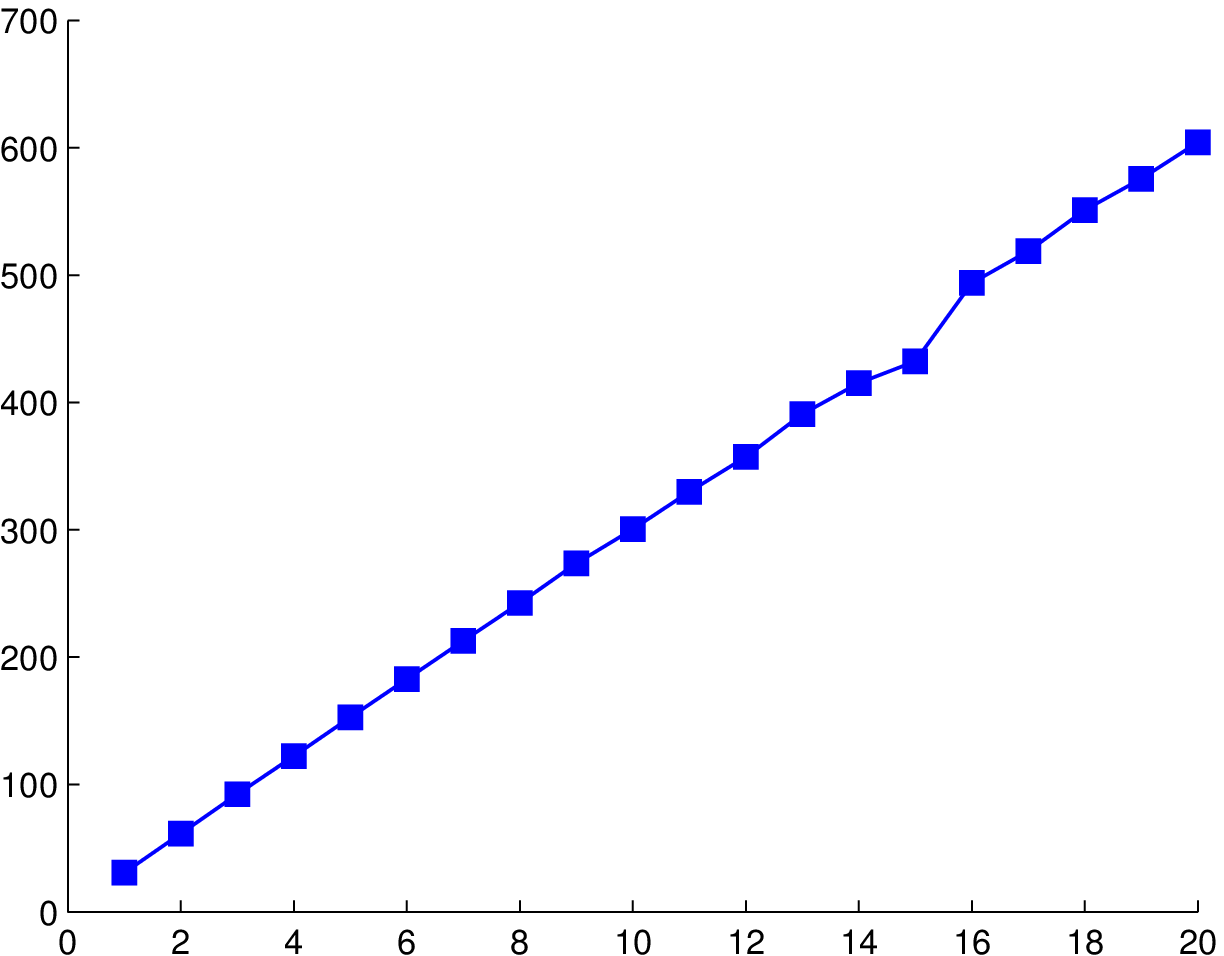}}

\end{figure}

\textbf{FEM discretization.} In this experiment a two dimensional rectangular domain $[0,3]\times[0,3]$ has been discretized by $2025$ bilinear finite elements. DD is applied by decomposing the domain into $9$ equal size subdomains $[0,1]\times[0,1]$ each over the x and the y-axis and discretized by $225$ finite elements. The underlying flow field $\mu$ is defined by time dependent harmonic functions:
\begin{equation}
\begin{array}{l}
    \mu_x(t,x,y) = sin(\pi - t/10) * 0.12 \\
    \mu_y(t,x,y) = sin(\pi / 2 - t/5) * 0.24
\end{array}
\end{equation}
The timestep is taken to be $\Delta t=0.1$ and the length of the simulation is set to be $2000$ time steps allowing for three full loops as suggested in Figure~\ref{e2_conf} (one loop requires $630$ time steps).

\textbf{Observations.} As in the first experiment, the analytical solution $u^2_a(x,y,t)$ is defined in the form of the Gaussian function~\eqref{GaussianFunc} with the following parameters:
\begin{equation}
\begin{array}{l}
\sigma = 0.1 + 0.01t, \; x_0 = 0.25, \; y_0 = 1.5 \\
    m_x(t,x,y) = (1 + cos(t / 10 - \pi)) * 1.2 \\
    m_y(t,x,y) = cos(t / 5 - \pi / 2) * 1.2
\end{array}
\end{equation}
The observations are generated by restricting the function $u^2_a(x,y,t)$ onto the nodes in subdomains $\Omega_i, i\in I_{obs}=\{3,4\}$. The structure of the observation matrix $\bm C_i$ is similar to the one from the first experiment, so the product $\bm C_i \bm M_i$ is diagonal with components equal $1$ if the corresponding FEM node is observed and $0$ otherwise. As above, the observation noise is taken to be uniformly distributed over the interval $[-0.5;0.5]$. The sensor's locations together with the sketch of the spill's trajectory are shown in Figure~\ref{e2_conf}.

\textbf{Uncertainty description.} Parameters of the localised filter at $i$-th subdomain are chosen as follows: $q=5$, $q_0=1.4$, $r=12$ and $\bm{Q}_i=qI$, $\bm{Q}_{0,i}=q_0 I$, $\bm{R}_i=rI$ and $\gamma_{T,i}=1.1$ describing a moderate level of trust in the FEM model over the subdomain $\Omega_i$, low confidence in the initial condition for the filter and a high trust to the observations. Figure~\ref{est_elips} shows the estimated value at the spatial point $x=1.4$, $y=1.4$ and demonstrates that the ground-truth is contained inside the ellipsoid. Examples of the observed fields are shown in Figures \ref{e2_obs25} and \ref{e2_obs180}, and the corresponding estimates generated by the localised filters are shown in Figures \ref{e2_est25} and \ref{e2_est180}. These figures show that even though the spill is not fully observed by the sensors, the local filters manage to reconstruct it with a reasonable precision level.

The performance of the localised estimate $u^2_{\text{localised}}$ is compared against the estimate $u^2_{\text{global}}$ of the so-called global filter which has been obtained by approximating the original $L^\infty$-ellipsoid by the $L^2$-ellipsoid i.e. filter without decomposition and reinitialisation. To compute the global filter equations~\eqref{eq:filteri_FEM}  are used with $\Omega_i=\Omega$ and the ellipsoids' matrices $\bm{Q},\bm{Q}_0$ and $\bm{R}$ defined as follows: to maintain consistency between descriptions \eqref{modelContEllipsoid1}, \eqref{modelContEllipsoid:i} of the global and local model errors respectively, and \eqref{observContEllipsoid1}-\eqref{observContEllipsoid:i} of the observation errors set $\bm{Q}=qI$, $\bm{Q}_{0}=q_0I$, $\bm{R}=rI$ and $\gamma_{T}=(T+1)N=1809$ where factor $N=9$ reflects the fact that $\ell(\Omega)=9\ell(\Omega_i)$ and time interval is set to be $T=200$. Figure~\ref{e2_filter_err} presents the spatial errors of the localised filters and the global filter. As one would expect, because of the nonstationary (in time) periodic behaviour of the underlying velocity field $\mu$, there are intervals where the errors are decreasing and increasing. At the same time, it is concluded, that in general both errors are not increasing over time and obey periodic behaviour. The respective estimation errors are in favour of the localised filter: $e_e(u^2_{\text{localised}})=16\%$ and $e_e(u^2_{\text{global}})=19\%$.\\

Figures~\ref{est_gl_loc}-\ref{ric_gl_loc} also suggests that even though the estimates are close to each other, the global filter overestimates the uncertainty in the system. Indeed, the uncertainty overestimation is demonstrated in the Figure~\ref{ric_gl_loc} where diagonal components of the Riccati operator $\bm{P}_i$ and $\bm{P}$ are plotted. Those components are computed at the spatial point $x=1.4, y=1.4$ and represent the uncertainty estimate provided by each of the filters via \eqref{eq:ME_i_FEM}. It can be seen that the localised filter's ellipsoid is much tighter than that of the global filter.

A comparison analysis of  the impact of the reinitialisation procedure onto the estimation error is in  figure~\ref{ric_reinit}.  The components of $\bm{P}_i$ obtained from the localised filter with the reinitialisation interval $\varepsilon$ equal to the time step of numerical integration $\Delta t=0.1$ are compared against the same components of $\bm{P}_i$, corresponding to the reinitialization interval of length $\varepsilon=1$. It is shown that the decrease of the reinitialisation interval leads to the decrease of the Riccati components which, in turn, reduces the estimation error.

Finally, components of $\bm{P}_i\bm{M}_i$ corresponding to the point $x=1.4, y=1.4$ computed with different FEM resolutions: 225 elements and 900 elements per subdomain are depicted in Figure~\ref{e2_ricFEMInc}. As it was expected, the increase of FEM degrees of freedom, does not increase the components of  $\bm{P}_i\bm{M}_i$ and the corresponding pointwise estimation error.

\subsection{Computational Performance}
\begin{figure}[htpb]
\centering
\includegraphics[width=10cm]{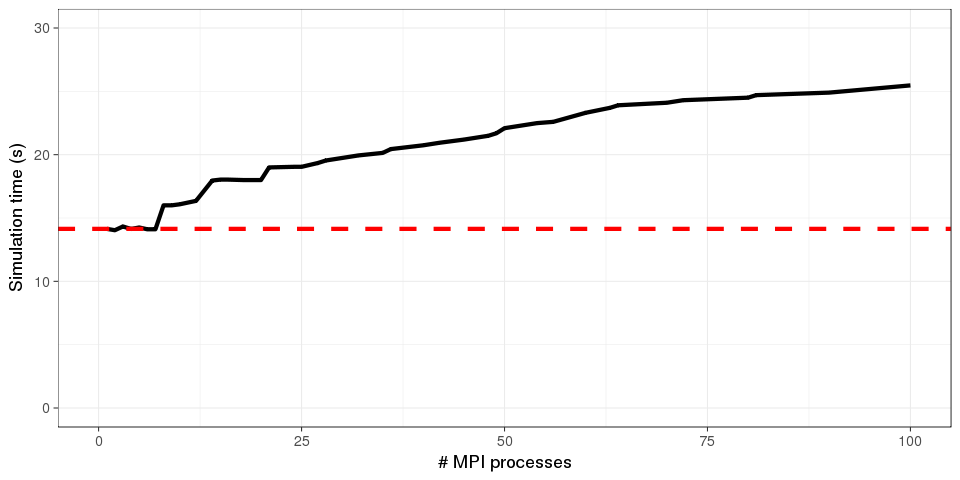}
  \caption{Simulation time to compute a $1000$ timestep solution plotted against the number of sub-domains. Scaling represent a weak scaling analysis where the computational size of the domain was increased in line with the number of MPI processes (i.e. the number of sub-domains equalled the number of MPI processes). The black line represents the total simulation time while dashed red line represents ideal scaling.}
\label{gfixedSubScale}
\end{figure}
Assume that  the global domain is decomposed into $N$ subdomains, each of them containing $N_{nd}^i$ finite elements. At each subdomain, the computational complexity of the localised filter is the combination of the computational complexity of the equations~\eqref{eq:feedbackMP} and~\eqref{eq:riccatiMP}.

To solve~\eqref{eq:feedbackMP} one needs to invert a matrix of size $N_{nd}^i \times N_{nd}^i$ which requires $\mathcal{O}((N_{nd}^i)^3)$ arithmetic operations. Similarly, to solve~\eqref{eq:riccatiMP} one needs $12\mathcal{O}((N_{nd}^i)^3)$. From these estimates, it is easy to conclude that: an increase of the number of finite elements corresponds to a dramatic increase in computational costs; solving the Riccati equation costs approximately $12$ times more  then solving the filter equation.

If in the above analysis, $N^i_{nd}$ is replaced by the total amount of FEM grid nodes $N_{nd}$, the complexity estimate of the global filter becomes $c_g=13\mathcal{O}((N_{nd})^3)$. The rough approximation of $N_{nd}$ by the $N N^i_{nd}$ results in
\begin{equation}
\label{complexGlobal}
c_g(N)=13N^3\mathcal{O}((N^i_{nd})^3).
\end{equation}
For the computational complexity estimation of the localised filter one also needs to keep into account the Schwartz iterations. Assume, that $p$ iterations were performed, the total number of operations for equation~\eqref{eq:feedbackMP} becomes $pN\mathcal{O}((N_{nd}^i)^3)$, and $12N\mathcal{O}((N_{nd}^i)^3)$ for the equation~\eqref{eq:riccatiMP}. Therefore, the total amount of arithmetic operations for the algorithm of the localised minimax filter for one time step is estimated as
\begin{equation}
\label{complexLocal}
c_l(N)=(p+12)N\mathcal{O}((N_{nd}^i)^3)
\end{equation}
Since $(p+12)N \ll 13N^3$, it is clear from~\eqref{complexGlobal}-\eqref{complexLocal} that localised filter provides significant complexity reduction comparing to the traditional global filter.

Finally, a very basic scaling benchmark has been performed on an IBM NextScale nx360 compute server. Each node consists of two 18-core Intel Xeon Processor E5-2699 v3 (2.3 GHz, 45 MB L3 cache per processor), 36 core total, forming a single NUMA (Non-Uniform Memory Architecture) unit with 256 GB of RAM and 10 GbE Infiniband network interconnect. Simulations investigated computational performance when increasing the number of sub-domains at the same rate as number of MPI processes. All simulations considered a 16 x 16 element sub-domain with number of sub-domains increased from 1 – 100 (and consequently MPI processes). MPI overheads were a result of 1) neighbour-to-neigbour data exchange of boundary data to propagate solution between sub-domains and 2) a global MPI reduction to compute the difference in the computed solution across sub-domain boundaries for convergence of the Schwarz solver. The solution was deemed to converge when this error was less than some predefined threshold. Computation of the error required a global MPI reduction operation at each iteration of the Schwarz solver to define convergence. The MPI synchronisation introduced at each time step incurs a latency and communication penalty; it also exacerbates any potential load imbalances as computation is constrained to the slowest process.

Figure~\ref{gfixedSubScale} presents the simulation time when running up to 100 MPI processes distributed across 5 nodes (with maximum of 20 MPI processes on any individual node). These results present a weak scaling configuration where problem size is increased together with number of computational cores (i.e. for each increase in number of cores, number of sub-domains, of fixed size, are increased correspondingly). An ideal model would produce no increase in simulation time, as workload assigned to each core remains fixed. The reality is that MPI synchronisation, along with contention of processes when more than one process is deployed on a single node will lead to performance overheads. Figure~\ref{gfixedSubScale} demonstrates that deploying on up to 5 cores produces no change in simulation time. This largely results from the fact that processes are equally distributed across nodes so that when running 5 MPI processes there is a single process on each node thereby leading to no contention issues. Beyond this there is some increase in model simulation time, potentially due to contention of MPI processes for resources. Modern multicore systems are designed to allow cluster of cores to share certain hardware components such as cache, memory controllers and interconnects. Hence MPI processes running on the same node may compete for the same resources and consequently suffer from performance degradation. The approximately linear increase in simulation time suggests that the performance degradation in this simulation is a result of 1) slowdown due to contention for hardware resources and 2) MPI overheads primarily due to the global communication required for error computation. The MPI overhead due to neighbour-to-neighbour data exchange required for the Schwarz synchronization is a local communication only which is not expected to increase computation cost beyond a five point stencil implementation (i.e. one neighbour in each direction). Despite performance overheads from MPI synchronisation and resource contention, these results demonstrate the benefit of deploying the model in a sub-domain parallel approach, providing an increase in domain size of 100,000 with an increase in total simulation time of 69\% when deploying across 100 cores.

\section{Concluding remarks}
\label{sec:Conclusion}
In this work, a new state estimation algorithm is proposed for advection dominated flows with deterministic/stochastic (non-Gaussian) uncertainty description of $L^\infty$-type. The algorithm is recursive, i.e. the current estimate depends on the previous one and on the current observation, computationally efficient and scalable. It delivers both integral and pointwise estimates which converge to the corresponding continous quantities over each local subdomain.
\appendix
\section{FEM approximations}
This appendix expands on the FEM approximations of the continuous local filtering subproblem~\eqref{eq:obs:i},\eqref{LocalIterProblemi}\eqref{eq:obsnoise:i}.

\subsection{FEM model for $i$-th local subproblem}
\label{sec:appendix_FEM}
To simplify the presentation consider the case of two spatial dimensions, $n=2$. To apply FEM~\eqref{eq:state} is reformulated in the weak form. Specifically, $u\in L^\infty(0,T,H^1_0(\Omega))$ is the unique solution of~\eqref{eq:state} if for any $v \in H^1(\Omega)$ the following integral equality holds true:
\begin{equation} \label{weakPDEnoBI}
\int_{\Omega} \dot u v d\Omega = \int_{\Omega}\epsilon \Delta u vd\Omega - \int_{\Omega}(\mu_1\partial_x u + \mu_2 \partial_y u) v d\Omega\,.
\end{equation}
The divergence theorem is applied in order to enforce the boundary conditions (in the weak sense):
\begin{equation}
\label{weakPDE}
  \begin{split}
\int_{\Omega} \dot u v d\Omega =
& -\int_{\Omega}\epsilon (\partial_x u \partial_x v + \partial_y u \partial_y v) d\Omega \\
& + \int_{\Omega}u(\mu_1\partial_x v + \mu_2\partial_y v) d \Omega \\
& + \int_{\partial \Omega} \epsilon \frac{\partial u}{\partial \bm n} v d\Omega - \int_{\partial \Omega} (\mu_1 + \mu_2)g v d\Omega
  \end{split}
\end{equation}
where the function $g$ defines Dirichlet data. Similarly, for $i$-th local subdomain at $n$-th Schwartz iteration the weak formulation of~\eqref{LocalIterProblemi} takes the following form:
\begin{align}
\int_{\Omega_i} \dot u_i^{n+1} v d\Omega = \nonumber  \\
& -\int_{\Omega_i}\epsilon (\partial_x u_i^{n+1} \partial_x v + \partial_y u_i^{n+1} \partial_y v) d\Omega  \label{DiffInt} \\
& + \int_{\Omega_i}u_i^{n+1}(\mu_1\partial_x v + \mu_2\partial_y v) d \Omega_i \label{AdvInt} \\
& + \int_{\partial \Omega \cap \partial \Omega_i} \epsilon \frac{\partial u_i^{n+1}}{\partial n} v d\Gamma \label{NeumGlobBoundInt} \\
& + \int_{\Gamma_{i,j}^{in}\in \Gamma_i^{in}} \epsilon \frac{\partial u_i^{n+1}}{\partial n} v d\Gamma \label{NeumInBoundInt} \\
& + \int_{\Gamma_{i,j}^{out}\in \Gamma_i^{out}} \epsilon \frac{\partial u_i^{n+1}}{\partial n} v d\Gamma \label{NeumOutBoundInt} \\
& - \int_{\partial \Omega \cap \partial \Omega_i} (\mu_1 + \mu_2)u_i^{n+1} v d\Gamma  \label{DirGlobBoundInt} \\
& - \int_{\Gamma_{i,j}^{in}\in \Gamma_i^{in}} (\mu_1 + \mu_2)u_j^{n} v d\Gamma  \label{DirInBoundInt} \\
& - \int_{\Gamma_{i,j}^{out}\in \Gamma_i^{out}} (\mu_1 + \mu_2)u_i^{n+1} v d\Gamma \label{DirOutBoundInt}\,,
\end{align}
where $d\Omega$ and $d\Gamma$ denote the differentials for the integrals over the subdomains and parts of their boiundaries. In d-ADN the decomposition integral \eqref{NeumOutBoundInt} vanishes.

The FEM discretization of~\eqref{LocalIterProblemi} proceeds by means of polygonal finite elements $\Lambda_m$, $m=1,..,N_i^{el}$, i.e., the domain $\Omega_i$ is divided into a finite number of polygones $\Lambda_m$ with vertices $x_k$, $k=1..N_{nd}^i$, $\Omega\approx\cup \Lambda_m$. The vertices $x_k$ form the FEM grid, and at each node $x_s$ of this grid, the corresponding basis function $\phi_k$ satisfies
\begin{equation}
\label{kronrule}
\phi_k(x_s)=\delta_{ks}\,,\quad \delta_{ks}\text{ is the Kronecker delta. }
\end{equation}
The most simple basis functions are tensor products of 1D piece-vise linear functions or so-called ``hat functions''. In what follows,  the following subsets of indices are adopted that define the subsets of inflow/outflow boundary nodes as
\begin{equation}
  \begin{split}
    D_{in/out}&= \{k:x_k \in \Gamma_{i,j}^{in/out}\}\,,\\
    N_{in}&= \{k:\exists m \text{ that } \partial \Lambda_m \cap \Gamma_{i,j}^{in} \neq \emptyset \text{ and } x_k \in \Lambda_m\}\,,\\
I&=\{s:x_s\not\in D_{in/out}\cup N_{in}\}\,.
  \end{split}
\end{equation}
Now, $u_i^{n+1}$ is approximated as follows: \[
u_i^{n+1}=\sum_{k=1}^{N_{nd}^i} u^{n+1}_{ik}(t)\phi_k\,.
\]
To find the coefficients $u_{ik}^{n+1}$ the above representation is substituted into~\eqref{DiffInt}-\eqref{DirOutBoundInt} which leads to the FEM model for the coefficients:
\begin{equation}
\label{FEMDProblems}
\left\{ \begin{array}{l}
    \bm{M}_i\frac{d\bm{u}_i^{n+1}}{dt}=
    \bm{S}_i(t)\bm{u}_i^{n+1} + \bm{f}_i(t;\bm{u}_j^n) + \bm{M}_i\bm{e}_i\\
    \bm{u}_i^{n+1}(0)=\bm{u}_i^0+\bm{e}_{0,i}
  \end{array}\right.
\end{equation}
where $\bm{u}_i^{n+1}=(u_{i1}^{n+1}(t),..,u_{iN_{nd}^i}^{n+1}(t))^T$ is the vector of FEM coefficents representing the FEM approximation of $u_i^{n+1}$, $\bm{u}_i^0$ is the FEM approximation of the restriction of $u_0$ onto $\Omega_i$, $\bm{e}_i$ and $\bm{e}_{0,i}$ are the vectors of coefficients of the spatial FEM discretization of the model and initial errors, $\bm{M}_i=\{\int_{\Omega_i}\phi_k \phi_sd\Omega\}_{k,s=1}^{N_{nd}^i}$ is the local mass matrix, $\bm{S}_i$ is the local stiffness matrix defined by
\begin{equation}
\label{locStiff}
	\bm{S}_i(t) = S_i^\Omega(t) + S_i^{N_{in}}(t)+ S_i^{D_{out}}(t)
\end{equation}
where \[
	S_i^\Omega(t)  =
	\begin{pmatrix}
	  S^\Omega_{D_{in} D_{in}} & S^\Omega_{D_{out} D_{in}} & S^\Omega_{N_{in}/\Gamma D_{in}} & S^\Omega_{I D_{in}}\\
	  S^\Omega_{D_{in} D_{out}} & S^\Omega_{D_{out} D_{out}} & S^\Omega_{N_{in}/\Gamma D_{out}} & S^\Omega_{I D_{out}}\\
	  S^\Omega_{D_{in} N_{in}/\Gamma} & S^\Omega_{D_{out} N_{in}/\Gamma} & S^\Omega_{N_{in}/\Gamma N_{in}/\Gamma} & S^\Omega_{I N_{in}/\Gamma}\\
	  S^\Omega_{D_{in} I} & S^\Omega_{D_{out} I} & S^\Omega_{N_{in}/\Gamma I} & S^\Omega_{II}
	\end{pmatrix}
\]
and \[
\begin{split}
	 S_{X,Y}^\Omega:=\{&-\int_{\Omega_i}\epsilon (\partial_x \phi_k \partial_x \phi_s + \partial_y\phi_k \partial_y \phi_s) d\Omega + \int_{\Omega_i}\phi_k(\mu_1\partial_x \phi_s + \mu_2\partial_y \phi_s) d \Omega\\
	 &+ \int_{\partial \Omega \cap \partial \Omega_i} \epsilon \phi_s\frac{\partial \phi_k}{\partial n} d\Gamma\}_{k\in X,s\in Y}\,,
\end{split}
\]
with $X$ and $Y$ corresponding to the subsets of the indices of the basis functions, e.g., $X=D_{in}$ and $Y=D_{out}$. In fact, $S_i^\Omega$ absorbs the integrals~\eqref{DiffInt}, \eqref{AdvInt} and \eqref{NeumGlobBoundInt}. Now, $S_i^{N_{in}}(t)$ is defined as follows: \[
S_i^{N_{in}}(t):=
\begin{pmatrix}
	 S^{N_{in}}_{D_{in} D_{in}} & 0 & S^{N_{in}}_{N_{in}/\Gamma D_{in}} & 0 \\
 	 0 & 0 & 0 & 0 \\
 	 S^{N_{in}}_{D_{in} N_{in}/\Gamma} & 0 & S^{N_{in}}_{N_{in}/\Gamma N_{in}/\Gamma} & 0 \\
 	 0 & 0 & 0 & 0
\end{pmatrix}
\]
with $S^{N_{in}}_{X,Y} = \{\int_{\Gamma_{i,j}^{in}\in \Gamma_i^{in}} \epsilon \phi_s\frac{\partial \phi_k}{\partial n} d\Gamma\}_{k\in X,s\in Y}$. Clearly, $S_i^{N_{in}}(t)$ absorbs~\eqref{NeumInBoundInt}. Finally,  $S_i^{D_{out}}(t)$ is given by \[
S_i^{D_{out}}(t):=\begin{pmatrix}
	 0 & 0 & 0 & 0 \\
 	 0 & -S^{D_{out}} & 0 & 0 \\
 	 0 & 0 & 0 & 0 \\
 	 0 & 0 & 0 & 0
	\end{pmatrix}
\] with $S^{D_{out}}=\{\int_{\Gamma_{i,j}^{out}\in \Gamma_i^{out}} (\mu_1 + \mu_2)\phi_k \phi_s d\Gamma \}_{k,s=1}^{N_{nd}^i}$, so that $S_i^{D_{out}}$ absorbs~\eqref{DirOutBoundInt}. The local source vector absorbs the integrals \eqref{DirInBoundInt} and \eqref{DirGlobBoundInt} (the latter equals $0$ as the global problem has homogeneous Dirichlet boundary condition). It is defined by
\begin{equation}
\label{locSource}
	\bm{f}_i(t;\bm{u}_j^n) = \bm{M}_i\tilde{f}_i(t) + [S_{i}^{D_{in}}(t) \bm{u}_j^{n,D_{out}}, 0_{D_{out}},0_{N_{in}/\Gamma},0_{I}]^T
\end{equation}
where $\tilde{f}_i(t)=(f_i(x_{i1})\dots f_i(x_{iN^i_{nd}}))^\top$ is the FEM approximation of the restriction of the source term $f$ onto $\Omega_i$,
$S_{i}^{D_{in}}$ is defined by substituting 'out' by 'in' in the definition of  $S^{D_{out}}$, $\bm{u}_j^{n,D_{out}}$ denotes the sub-vectors of $\bm{u}_j^{n}$ with components $\bm{u}_j^{n}(s)$ such that $s\in D_{in}^i\cap D_{out}^j$ (here $D^i_{in}$ denotes $D_{in}$ of $\Omega_i$). $\bm{u}_i^{D_{out}}$ is defined analogously.

Finally,  note that the block-structure of the stiffness matrix suggests the following splitting of the vector $\bm{u}_i^{n+1}$:
\begin{equation}
\bm{u}_i^{n+1}=[\bm{u}_i^{D_{in}}, \bm{u}_i^{D_{out}}, \bm{u}_i^{N_{in}/\Gamma}, \bm{u}_i^I]^T
\end{equation}
and $\bm{u}_i^{N_{in}}=[\bm{u}_i^{D_{in}}, \bm{u}_i^{N_{in}/\Gamma}]^T$.

\section{Proof of Lemma~\ref{l:MF_FEM}}
\label{sec:proofs}
\begin{proof}
Recall from section~\ref{sec:local-minimax-filter} that $w_i^{n+1}$ depends linearly on $q_j$ and $ w_j^n$, hence the minimax estimate of $w_i^{n+1}$ is given by $\hat w_i^{n+1}$, the solution of~\eqref{eq:wi} with the Dirichlet boundary condition $w_i^{n+1} = \hat u_j^n$ on $\Gamma^{in}_{i,j}\in \Gamma_i^{in}$, where $\hat u_j^n$ denotes the $(n,j)$-filter obtained on the $n$-th iteration of the Schwartz iterative procedure. Analogously to~\eqref{FEMDProblems}, the FEM model of~\eqref{eq:wi} with the Dirichlet boundary condition $w_i^{n+1} = \hat u_j^n$ on $\Gamma^{in}_{i,j}\in \Gamma_i^{in}$ is introduced:
\begin{equation}
\label{eq:wi_FEM}
\left\{ \begin{array}{l}
    \bm{M}_i\frac{d\bm{\hat w}_i^{n+1}}{dt}=
    \bm{S}_i(t)\bm{\hat w}_i^{n+1} + \bm{f}_i(t;\bm{\hat u}_j^n)\\
    \bm{\hat w}_i^{n+1}(0)=\bm{u}_i^0
  \end{array}\right.
\end{equation}
To compute $\bm{\hat q}_i^{n+1}$ \eqref{eq:MF_qi} is discretised:
\[
\begin{split}
l(\hat q_i^{n+1}(T)) &=\gamma_{T,i}^{-1}\int_{\Omega_i\times (0,T)} r_i(t,x) (H_i p_i)(t,x)\tilde y_i(t,x) dxdt  \\
&= \gamma_{T,i}^{-1}\int_{\Omega_i\times (0,T)} p_i(t,x) (H_i^\star r^{\frac12}_i r^{\frac12}_i \tilde y_i)(t,x) dxdt \\
&\approx  \gamma_{T,i}^{-1}\int_{\Omega_i\times (0,T)} \sum_s^{N_{nd}^i} p_i(t,x_s) \phi_s(x) \\
& \times \int_{\Omega_i} \sum\limits_{n,m,j}^{N_{nd}^i}h(x_n-z_m) r^{\frac12}_i(z_m)\phi_n(x) \phi_m(z) r_i^{\frac12}(z_j) \tilde y_i(z_j,t)\phi_j(z)dzdx\\
&= \gamma_{T,i}^{-1}\int_{\Omega_i\times (0,T)} \sum_s^{N_{nd}^i} p_i(t,x_s) \phi_s(x) \phi_n(x) \\
& \times \sum\limits_{n,m,j}^{N_{nd}^i}h(x_n-z_m) r^{\frac12}_i(z_m) \int_{\Omega_i}\phi_m(z) \phi_j(z)dz\,r_i^{\frac12}(z_j) \tilde y_i(z_j,t) dx\\
&=\gamma_{T,i}^{-1}\int_0^T(\bm{p}_i(t), \bm{M}_i \bm{C}_i \bm{R}_{i}^{\frac 12} \bm{M}_i\bm{R}_i^{\frac 12} \bm{\tilde y}_i(t))dt
\end{split}
\]
where $\phi_s$ and $\{x_s\}_{s=1}^{{N_{nd}^i}}$ are defined as in section~\ref{sec:appendix_FEM}, and $\tilde y_i = y_i - H_i \hat w_i^{n+1}$,
\[
\bm{\tilde y}_i = (\tilde y_i(x_1,t),\dots,\tilde y_i(x_{N_{nd}^i}))^\top\,,
\]
FEM approximation of~\eqref{eq:pzi} reads as follows\footnote{For instance, the term $\bm{Q}_i^{\frac12}\bm{M}_i\bm{Q}_i^{\frac12}\bm{z}_i$ represents the matrix resulting from the FEM approximation of the integral $\int_{\Omega_i} \phi_k q^2_i z_i dx$, e.g.: $\int_{\Omega_i} \phi_k q^2_i z_i dx =\int_{\Omega_i} (\phi_k q_i) (q_i z_i) dx = \int_{\Omega_i} (\sum_n \phi_k(x_n)q_i(x_n) \phi_n(x)) (\sum_j q_i(x_j)z_i(x_j) \phi_j dx = \int_{\Omega_i} q_i(x_k) \phi_k(x) \sum_j q_i(x_j)z_i(x_j) \phi_j(x) dx$ as $\phi_k(x_n)=\delta_{kn}$.
}:
\begin{equation}\label{eq:pzi_FEM}
\left\{ \begin{array}{l}
    \bm{M}_i \dot{\bm{z}}_{i} = -\bm{S}^\top_i \bm{z}_i + \gamma^{-1}_{T,i}\bm{M}_i\bm{C}_i^\top\bm{R}_i^{\frac 12}\bm{M}_i\bm{R}_i^{\frac 12}\bm{C}_i\bm{M}_i\bm{p}_i \\
    \bm{M}_i \bm{z}_i(T) = \bm{M}_i\bm{l}_i\\
    \bm{M}_i \dot{\bm{p}}_{i} = \bm{S}_i \bm{p}_i + \gamma_{T,i}\bm{Q}_i^{\frac12}\bm{M}_i\bm{Q}_i^{\frac12}\bm{z}_i \\
    \bm{M}_i\bm{p}_i(0) = \gamma_{T,i}\bm{Q}_{0,i}^{\frac 12}\bm{M}_i\bm{Q}_{0,i}^{\frac 12}\bm{z}_i(0)
\end{array}\right.
\end{equation}
Define $\bm{d}_i:=\bm{M}_i\bm{p}_{i} $ and multiply the first two equalities of~\eqref{eq:pzi_FEM} by $\bm{M}_i^{-1}$:
\begin{equation}
\left\{ \begin{array}{l}
    \dot{\bm{z}}_{i} = -\bm{M}_i^{-1}\bm{S}^\top_i \bm{z}_i + \gamma^{-1}_{T,i}\bm{C}_i^\top\bm{R}_i^{\frac 12}\bm{M}_i\bm{R}_i^{\frac 12}\bm{C}_i\bm{d}_i\\
    \bm{z}_i(T) = \bm{l}_i\\
    \dot{\bm{d}}_{i} = \bm{S}_i \bm{M}_i^{-1} \bm{d}_i + \gamma_{T,i}\bm{Q}_i^{\frac12}\bm{M}_i\bm{Q}_i^{\frac12}\bm{z}_i \\
    \bm{d}_i(0) = \gamma_{T,i}\bm{Q}_{0,i}^{\frac 12}\bm{M}_i\bm{Q}_{0,i}^{\frac 12}\bm{z}_i(0)
\end{array}\right.
\end{equation}
It is well known that the above Hamiltonian system for $\bm{d}_i$ and $\bm{z}_i$ has the unique solution for any $\bm{l}_i$. It is here claimed that $\bm{d}_i = \bm{P}_i\bm{z}_i$ where $\bm{P}_i$ solves the matrix DRE in~\eqref{eq:filteri_FEM}. Indeed, by substituting $\bm{d}_i' = \bm{P}_i\bm{z}_i$ into the differential equation for $\bm{d}_i$ it follows that: \[
\begin{split}
  &\bm{S}_i \bm{M}_i^{-1}\bm{P}_i\bm{z}_i + \gamma_{T,i}\bm{Q}_i^{\frac12}\bm{M}_i\bm{Q}_i^{\frac12}\bm{z}_i
=\dot{\bm{P}}_i\bm{z}_i - \bm{P}_i \bm{M}_i^{-1}\bm{S}^\top_i \bm{z}_i + \gamma_{T,i}^{-1}\bm{P}_i\bm{C}_i^\top\bm{R}_i^{\frac 12}\bm{M}_i\bm{R}_i^{\frac 12}\bm{C}_i\bm{P}_i\bm{z}_i
\end{split}
\]
Hence, $\bm{d}_i=\bm{d}_i'$ solves the aforementioned Hamiltonian system and coincides with its unique solution, $\bm{d}_i=\bm{d}_i'$. Now, the equation for $\bm{z}_i$ reads as follows: \[
\dot{\bm{z}}_{i} = -\bm{M}_i^{-1}\bm{S}^\top_i\bm{z}_i + \gamma^{-1}_{T,i}\bm{C}_i^\top\bm{R}_i^{\frac 12}\bm{M}_i\bm{R}_i^{\frac 12}\bm{C}_i\bm{P}_i\bm{z}_i\,, \bm{z}_i(T) = \bm{l}_i\,.
\]
Assume that $\bm{\tilde u}_i$ solves the first equation in ~\eqref{eq:filteri_FEM} provided $\bm{u}_i^0=0$, $\bm{f}_i(t;\bm{\hat u}_j^n)=0$ and $\bm{y}_i$ is substituted by $\bm{\tilde y}_i$. It is obtained:
\[
\begin{split}
l(\hat q_i^{n+1}(T)) &=\gamma^{-1}\int_{\Omega_i\times (0,T)} r_i(t,x) (H_i p_i)(t,x) \tilde y_i(t,x) dxdt  \\
&\approx\gamma^{-1}\int_0^T(\bm{p}_i(t), \bm{M}_i \bm{C}_i \bm{R}_{i}^{\frac 12} \bm{M}_i\bm{R}_i^{\frac 12} \bm{\tilde y}_i(t))dt \\
&=\gamma^{-1}\int_0^T(\bm{d}_i(t), \bm{C}_i \bm{R}_{i}^{\frac 12} \bm{M}_i\bm{R}_i^{\frac 12} \bm{\tilde y}_i(t))dt\\
&=\gamma^{-1}\int_0^T (\bm{z}_i(t),  \bm{P}_i\bm{C}_i \bm{R}_{i}^{\frac 12} \bm{M}_i\bm{R}_i^{\frac 12} \bm{\tilde y}_i(t))dt\\
&=\int_0^T (\bm{z}_i(t), \dfrac{d \bm{\tilde u}_i}{dt} - \bm{S}_i \bm{M}_i^{-1}\bm{\tilde u}_i + \gamma_{T,i}^{-1}\bm{P}_i \bm{C}_i^\top\bm{R}_i^{\frac 12}\bm{M}_i\bm{R}_i^{\frac 12}\bm{C}_i\bm{\tilde u}_i)dt\\
&=(\bm{l}_i, \bm{\tilde u}_i(T)) - \int_0^T (\dot{\bm{z}}_i, \bm{\tilde u}_i)dt \\
& - \int_0^T(\bm{z}_i,\bm{S}_i \bm{M}_i^{-1}\bm{\tilde u}_i - \gamma_{T,i}^{-1}\bm{P}_i \bm{C}_i^\top\bm{R}_i^{\frac 12}\bm{M}_i\bm{R}_i^{\frac 12}\bm{C}_i\bm{\tilde u}_i)dt\\
&=(\bm{l}_i, \bm{\tilde u}_i(T))
\end{split}
\]
Now, \eqref{eq:uwq} implies that $l_i(\hat u_i^{n+1}) = l_i(\hat w_i^{n+1}) + l_i(\hat q_i^{n+1})$ so that \[
l_i(\hat u_i^{n+1})\approx (\bm{l}_i, \bm{\tilde u}_i(T)+\bm{M}_i\bm{\hat w}_i^{n+1}(T))\,.
\]
Finally, it is straigntforward to check by differentiating that $\bm{\hat u}_i^{n+1}:=\bm{\tilde u}_i+\bm{M}_i\bm{\hat w}_i^{n+1}$.
\end{proof}



\section*{References}

\end{document}